\newtheorem{theorem}{Theorem}[section]
\newtheorem{lemma}[theorem]{Lemma}
\theoremstyle{definition}
\newtheorem{definition}[theorem]{Definition}
\newtheorem{remark}[theorem]{Remark}
\numberwithin{equation}{section}
\begin{document}
\title{On the arithmetic Galois covers of higher relative dimensions}
\author{Feng-Wen An}
\address{School of Mathematics and Statistics, Wuhan University, Wuhan,
Hubei 430072, People's Republic of China}
\email{fwan@amss.ac.cn}
\subjclass[2010]{11G35; 14H30; 14H37; 14J50}
\keywords{arithmetic Galois cover, automorphism group, Galois cover of  higher relative dimension, Galois group}

\begin{abstract}
In this paper we will give the calculus, the criterion, and the existence of the arithmetic Galois covers of higher relative dimensions.
\end{abstract}

\maketitle

\begin{center}
{\tiny {Contents} }
\end{center}

{\tiny \qquad {Introduction} }

{\tiny \qquad {1. Review of Transcendental Galois Extensions} }

{\tiny \qquad {2. Galois Covers of Higher Relative Dimensions} }

{\tiny \qquad {3. Proof of Theorem 2.3} }

{\tiny \qquad {4. Proof of Theorem 2.6}}

{\tiny \qquad {References}}

\section*{Introduction}

Let $L$ be an extension of a field $K$ (not necessarily algebraic). We can define the Galois extension for $L/K$ in a manner similar to algebraic extensions of fields, i.e., $L$ is  Galois over $K$ if $K$ is the invariant subfield of the Galois group $Gal(L/K)$. However, the situation of a transcendental extension $L/K$ is more complicated. The Galois group $Gal(L/K)$ is always an infinite group; many approaches established upon finite dimensions  will not be valid in general (see \cite{1004}).

In fact, there exist several types of (general) Galois extensions of fields, from the weakest one to the strongest one, such as: \emph{Galois}; \emph{tame Galois}; \emph{strong Galois}; \emph{absolute Galois} (see \S 1 for definitions). These general Galois extensions coincide with each other for algebraic extensions but in general are distinct from each other for transcendental ones (for instance, see \emph{Remark 1.3}; for counterexamples, see \cite{1004}).

Given two arithmetic varieties $X/Y$, i.e., two integral schemes surjectively over $Spec\mathbb{Z}$ of finite dimensions. The geometry of the arithmetic varieties $X/Y$ encodes and determines a good quantity of information of the function fields $k(X)/k(Y)$. In deed, there exists a nice relationship between the geometry of the arithmetic varieties and the arithmetic of the function fields. If $\dim X=\dim Y$, in particular, $k(X)$ is a finite extension of $k(Y)$, there are many well-known results on the topic or related (for instance, see \cite{sga1,s-v1,s-v2}).

Now consider a general case that $\dim X \geqq \dim Y$, i.e., the function field $k(X)$ is a transcendental extension of $k(Y)$. Then the situation will be changed considerably without mentioning the transcendental Galois extensions of the function fields. This leads us discuss the properties between arithmetic Galois covers of higher relative dimensions and Galois groups of the function fields.

In fact, the arithmetic Galois covers of higher relative dimensions are encountered by us
in the discussions on arithmetic function fields enlightened to some extent by Lang's unramified class field theory of function fields in several variables (see \cite{lang}).

We say that the arithmetic varieties $X/Y$ are a \emph{geometric model} of the fields $L/K$ if for the function fields we have $k(X)=L$ and $k(Y)=K$. In such a case, the arithmetic varieties $X/Y$  will give us a geometric vivid imagination of the arithmetic function fields $L/K$, in particular, the automorphism group $Aut(X/Y)$ will be naturally isomorphic to the Galois group $Gal(L/K)$ of the fields; conversely, as fields are more accessible, we can change the fields $L/K$ to control the arithmetic varieties $X/Y$ in a birational geometry.

Hence, first of all, there are several problems needing to be dealt with:
\begin{itemize}
\item Let $X$ and $Y$ be two arithmetic varieties such that $\dim X \geqq \dim Y$. When does $Aut(X/Y)\cong Gal(k(X)/k(Y))$  hold naturally as groups?

\item Let $X$ be an arithmetic variety. Given any extension $L$ of the function field $K\triangleq k(X)$. Does there exist any arithmetic variety $X_{L}$ such that $X_{L}/X$ are a geometric model of the fields $L/K$? How to construct $X_{L}$ when the field $L$ varies like a function?

\item Or return to the theory of class fields. Let $X$ be an arithmetic variety. How to use the geometric data of $X$ to describe the class fields over the function field $K\triangleq k(X)$ (in several variables)?
\end{itemize}

In this paper we will have several discusses on the topics above. For any two arithmetic varieties $X/Y$ such that $\dim X \geqq \dim Y$, we will introduce the \emph{local completeness} (\emph{Definition 2.2}) to give a criterion for the identity that $Aut(X/Y)\cong Gal(k(X)/k(Y))$  holds naturally (\emph{Theorem 2.3}). For the case of normal schemes, the condition of local completeness is automatically satisfied.

Given an arbitrary extension $L$ of the function field $k(X)$. We will construct an arithmetic variety $X_{L}\triangleq X[\Delta _{L/k(X)}]$ such that $k(X_{L})=L$ and that $X_{L}$ is affine and surjective over the arithmetic variety $X$; moreover, if $L$ is Galois over $k(X)$, $X_{L}$ is correspondingly Galois over $X$ in the field $L$ (\emph{Theorem 2.6}). Here, $\Delta _{L/k(X)}$ denotes a set of generators of $L$ over $k(X)$. When $X$ and $L$ vary, we have the \emph{calculus of arithmetic Galois covers} $X[\Delta _{L/k(X)}]$ over $X$ in $L$ (see \S 2.5).

As an application, by these results above we can give a  computation of the \'{e}tale fundamental group of an arithmetic scheme. For an arithmetic variety $X$, there is a natural isomorphism $\pi^{et}_{1}(X)\cong Gal(k(X)^{au}/k(X))$ between groups, that is, the \'{e}tale fundamental group $\pi^{et}_{1}(X)$ of $X$ is  isomorphic to the Galois group $Gal(k(X)^{au}/k(X))$ of the maximal arithmetically unramified extension $k(X)^{au}$ of the function field $k(X)$ in a natural manner (see \cite{0910}).
Here, the arithmetically unramified, defined as one has done in algebraic number theory (for instance, see \cite{neu}),  coincides with that in algebraic number theory (see \cite{1006}).

Hence, the \'{e}tale fundamental group $\pi^{et}_{1}(X)$ of an arithmetic variety $X$ encodes all unramified extensions of the function field $k(X)$ (in several variables).

\section{Review of Transcendental Galois Extensions}

Let $L$ be an extension of a field $K$ (not necessarily algebraic).
In this section we will review several types of Galois extensions of fields: \emph{Galois}; \emph{tame Galois}; \emph{strong Galois}; \emph{absolute Galois} (see \cite{1004}). Those Galois extensions of fields, passing
from the weakest one to the strongest one,  coincide with each other for algebraic extensions but are different from each other for transcendental extensions in general.

\subsection{Recalling several types of transcendental Galois extensions of fields}

As usual, the \textbf{Galois group} of $L$ over $K$, namely $Gal(L/K)$, is the group of all automorphisms $\sigma$ of $L$ such that $\sigma (x)=x$ holds for any $x\in K$. Denote by $tr.deg L/K$ the transcendental degree of $L$ over $K$.

By a \textbf{nice basis} $(\Delta, A)$ of $L$ over $K$, we understand that $\Delta$ is a transcendental basis of $L$ over $K$ and $A$ is a linearly basis of $L$ over $K(\Delta)$ such that $L=K(\Delta)[A]$ is algebraic over $K(\Delta)$.

Furthermore, such a nice basis $(\Delta, A)$ is  a \textbf{linearly disjoint basis} of $L$ over $K$ if the fields $K(\Delta)$ and $K(A)$ are linearly disjoint over $K$.
Put
\begin{itemize}
\item $\Sigma[L/K]\triangleq$ the set of nice bases of $L$ over $K$;

\item $\Sigma[L/K]_{ld}\triangleq$ the set of linearly disjoint basis of $L$ over $K$.
\end{itemize}

From \emph{Definition 1.1} below it is seen that a transcendental Galois extension is more complicated than an algebraic one.

\begin{definition}
(\cite{1004})
There exist several types of Galois extensions of fields such as the following.
\begin{itemize}

\item $L$ is said to be \textbf{Galois} over $K$  if $K$ is the
invariant subfield of $Gal(L/K)$, i.e., if $K=\{x\in L :\sigma (x)=x
\text{ for any }\sigma \in Gal(L/K) \}.$

\item  $L$ is said to be \textbf{tame Galois} over $K$  if there is some $(\Delta, A)\in \Sigma[L/K]$ such that $L$ is Galois over the subfield $K(\Delta)$.

\item  $L$ is said to be \textbf{strong Galois} over $K$  if the two conditions are satisfied:
     \begin{itemize}
     \item $\Sigma[L/K]_{ld}$ is a nonempty set;

     \item $L$ is an algebraic Galois extension of $K(\Delta)$ for any $(\Delta, A)\in\Sigma[L/K]_{ld}$.
     \end{itemize}

\item  $L$ is said to be \textbf{absolute Galois} over $K$  if $L$ is Galois over any subfield $F$ such that $K\subseteq F\subseteq L$.
\end{itemize}
\end{definition}

\begin{remark}
(\cite{1004})
For the Galois extensions, we have
\begin{equation*}
\begin{array}{l}
\text{absolute Galois} \\

\implies \text{Galois};
\end{array}
\end{equation*}
\begin{equation*}
\begin{array}{l}
\text{absolute Galois (with linear disjoint bases)} \\

\implies  \text{strong Galois} \\

 \implies  \text{tame Galois} \\

  \implies \text{Galois}.
\end{array}
\end{equation*}
\end{remark}

\begin{remark}
(\cite{1004})
There is a comparison between algebraic Galois extensions and transcendental Galois extensions.
In a manner similar to \emph{conjugate} and \emph{normal} in algebraic extensions of fields, we have notions of \emph{conjugations} and \emph{quasi-galois} for transcendental extensions of fields. For algebraic extensions,  we have
$$\emph{Galois} =\emph{a unique conjugation} + \emph{separably generated}.$$
Likewise, for transcendental extensions of fields, we have
$$\emph{strong Galois} =\emph{a unique strong conjugation} + \emph{separably generated};$$
$$\emph{absolute Galois} =\emph{a unique absolute conjugation} + \emph{separably generated}.$$
However,
$$\emph{tame Galois} \not=\emph{a unique conjugation} + \emph{separably generated}.$$
See \cite{1004} for examples and counterexamples of the transcendental Galois extensions.
\end{remark}

\section{Galois Covers of Higher Relative Dimensions}

In this section we will discuss the calculus, the criterion, and the existence of the arithmetic Galois covers of higher relative dimensions.

\subsection{Conventions}

For brevity, by an \textbf{arithmetic variety} we will understand a finite-dimensional integral scheme that is surjectively over $\mathbb{Z}$ (not necessarily of finite type).

For an arithmetic variety $Z$, let
$k(Z)\triangleq \mathcal{O}_{X,\xi}$, i.e., the function field of $Z$, where $\xi$ is the generic point of $Z$.

\subsection{Definitions for Galois covers of higher relative dimensions}

Let $X$ and $Y$ be arithmetic varieties and let $f:X\rightarrow Y$ be a surjective morphism. Note that here the relative dimension $\dim X - \dim Y$ can be permitted to be positive. Suppose that
 $Aut\left( X/Y\right) $ is the group of automorphisms of $X$ over $Y$ by $f$.

We have several types of higher relative dimensional Galois covers over arithmetic varieties.

\begin{definition}
The arithmetic variety $X$ is \textbf{Galois} (or \textbf{tame Galois}, \textbf{strong Galois}, \textbf{absolute Galois}, respectively)
over $Y$ by $f$  if the two conditions are satisfied:
\begin{itemize}
\item $Aut(X/Y)\cong Gal(k(X)/k(Y))$ are canonically isomorphic groups;

\item $k(X)$ is naturally Galois (or tame Galois, strong Galois, absolute Galois, respectively) over $k(Y)$.
\end{itemize}
\end{definition}

\begin{definition}
The arithmetic variety $X$ is \textbf{locally complete} over $Y$ by $f$ if there is an affine open set $W\subseteq f^{-1}(V)$ and an isomorphism $\widetilde{\rho}:\mathcal{O}_{X}(U)\to \mathcal{O}_{X}(W)$ of algebras over $\mathcal{O}_{Y}(V)$ such that $\rho$ is naturally induced from $\widetilde{\rho}$ for each automorphism $\rho\in Gal(k(X)/f^{\sharp}(k(Y)))$ and each affine open sets $V\subseteq Y$ and $U\subseteq f^{-1}(V)$.
\end{definition}

\subsection{Criterion for Galois covers of higher relative dimensions}

Here there is a criterion for an arithmetic Galois cover of a higher relative dimension.

\begin{theorem}
Let $X$ and $Y$ be two arithmetic varieties such that $\dim X\geqq\dim Y$. Suppose that there is a surjective morphism $\phi:X\rightarrow Y$ such that
\begin{itemize}
\item $X$ is locally complete over $Y$;

\item $k(X)$ is Galois \emph{(}or tame Galois, strong Galois, absolute Galois, respectively\emph{)} over $k(Y)$ by $f$.
\end{itemize}

Then $X$ is Galois \emph{(}or tame Galois, strong Galois, absolute Galois, respectively\emph{)} over $Y$.
\end{theorem}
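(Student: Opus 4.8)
The plan is to reduce the theorem to the single nontrivial assertion that $\mathrm{Aut}(X/Y)\cong Gal(k(X)/k(Y))$ canonically; once this is established, the second bullet of Definition 2.1 is literally the second hypothesis of the theorem, so $X$ is Galois (or tame, strong, absolute Galois) over $Y$ by definition. Thus the entire content lies in producing and inverting a canonical group homomorphism between automorphisms of the scheme $X$ over $Y$ and field automorphisms of $k(X)$ fixing $f^{\sharp}(k(Y))$.

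First I would construct the forward map $\mathrm{Aut}(X/Y)\to Gal(k(X)/k(Y))$. Any $Y$-automorphism $\theta$ of $X$ sends the generic point $\xi$ of $X$ to itself (it is the unique point lying over the generic point of $Y$ in the dominant irreducible scheme $X$, and $\theta$ is a homeomorphism commuting with $f$), so $\theta$ induces an automorphism $\theta^{\sharp}_{\xi}$ of the local ring $\mathcal{O}_{X,\xi}=k(X)$ that restricts to the identity on the image of $k(Y)$; this assignment is visibly a group homomorphism, and it is injective because a $Y$-automorphism of an integral scheme is determined by its action on the function field (two morphisms from a reduced scheme that agree on a dense open agree everywhere, and $\mathrm{Spec}\,k(X)$ is the inverse limit of the affine opens of $X$). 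Then I would build the inverse map: given $\rho\in Gal(k(X)/f^{\sharp}(k(Y)))$, I want a $Y$-automorphism $\theta_{\rho}$ of $X$ inducing $\rho$ on $k(X)$. This is exactly where \emph{local completeness} (Definition 2.2) enters: for each pair of affine opens $V\subseteq Y$ and $U\subseteq f^{-1}(V)$ it supplies an affine open $W\subseteq f^{-1}(V)$ and an $\mathcal{O}_Y(V)$-algebra isomorphism $\widetilde{\rho}:\mathcal{O}_X(U)\to\mathcal{O}_X(W)$ whose induced map on fraction fields is $\rho$. Taking $\mathrm{Spec}$ of $\widetilde{\rho}$ gives a $V$-isomorphism $\mathrm{Spec}\,\mathcal{O}_X(W)\to\mathrm{Spec}\,\mathcal{O}_X(U)$, i.e.\ a local piece of the desired automorphism; I would then verify that as $(V,U)$ range over an affine cover these local isomorphisms are compatible on overlaps — the compatibility is forced because each piece induces the same map $\rho$ on $k(X)$ and the restriction maps into $k(X)$ are injective — and glue them to a global morphism $\theta_{\rho}:X\to X$ over $Y$, with $\theta_{\rho^{-1}}$ providing a two-sided inverse, so $\theta_{\rho}\in\mathrm{Aut}(X/Y)$. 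Finally one checks the two constructions are mutually inverse, which is immediate at the level of $k(X)$ by construction, hence on $X$ by the injectivity/density observation above, and that $f^{\sharp}(k(Y))=k(Y)$ inside $k(X)$ so that $Gal(k(X)/f^{\sharp}(k(Y)))=Gal(k(X)/k(Y))$.

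The main obstacle I anticipate is the gluing step: local completeness is stated pointwise in $(V,U)$ and does not a priori assert that the isomorphisms $\widetilde{\rho}$ can be chosen coherently, so I must argue that \emph{any} choice automatically agrees on intersections. The key is that everything happens inside the common ambient field $k(X)$: if $W_1,W_2$ are the affine opens produced for $(V,U_1),(V,U_2)$, then on $U_1\cap U_2$ both glued maps have the same effect on sections viewed inside $k(X)$, and since $\mathcal{O}_X$ is a sheaf of subrings of the constant sheaf $k(X)$ on the integral scheme $X$, equality in $k(X)$ forces equality of the morphisms. A secondary point requiring care is that $\theta_{\rho}$ is genuinely an automorphism (not merely an endomorphism) of $X$: this follows by applying the same construction to $\rho^{-1}$ and using that $\theta_{\rho}\circ\theta_{\rho^{-1}}$ and $\theta_{\rho^{-1}}\circ\theta_{\rho}$ both induce the identity on $k(X)$, hence are the identity on $X$. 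For the parenthetical variants (tame, strong, absolute Galois) no extra work is needed, since those properties of $k(X)/k(Y)$ are simply quoted from the hypothesis and transported through the isomorphism $\mathrm{Aut}(X/Y)\cong Gal(k(X)/k(Y))$ exactly as in Definition 2.1.
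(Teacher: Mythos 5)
Your proposal is correct and follows essentially the same route as the paper: the paper reduces Theorem 2.3 to its Lemma 3.1, which establishes the canonical isomorphism $Aut(X/Y)\cong Gal(k(X)/\phi^{\sharp}(k(Y)))$ by constructing the forward map $\sigma\mapsto\langle\sigma,\sigma^{\sharp-1}\rangle$ (restriction to the stalk at the generic point), checking it is an injective homomorphism, and then using local completeness to build local affine-scheme isomorphisms $\lambda_U$ from any $\rho$ and glue them to an automorphism of $X$ over $Y$. Your forward/inverse construction, the observation that the generic point is fixed, the injectivity-via-density argument, the use of local completeness to produce the local pieces, and the gluing argument forced by sections living in the common ambient field $k(X)$ all mirror the paper's Steps 1 through 4 and Substeps 4-a through 4-d.
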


We will prove \emph{Theorem 2.3} in \S 3.

\begin{remark}
The conclusion of \emph{Theorem 2.3} can be regarded as a generalization of several well-known related results in \cite{sga1,s-v1,s-v2} for the case that  $Aut(X/Y)$ are finite groups; at the same time, it is a generalization of the main result in \cite{1001} for the case that $\dim X=\dim Y$ and $Aut(X/Y)$ is an infinite group.
\end{remark}

\subsection{Existence for Galois covers of higher relative dimensions}

Let $X$ be an arithmetic variety and let $L$ be an extension of the function field $k(X)$. Note that here $L$ has a nonnegative transcendental degree over $k(X)$, i.e., $tr.deg L/K \geqq 0$.

\begin{definition}
Suppose that $L$ is Galois \emph{(}or tame Galois, strong Galois, absolute Galois, respectively\emph{)} over $k(X)$. An arithmetic variety $X_{L}$  is said to be a \textbf{Galois} (or \textbf{tame Galois}, \textbf{strong Galois}, \textbf{absolute Galois}, respectively) \textbf{cover} of $Y$ \textbf{in a field} $L$ if there is a surjective morphism $\phi_{L}:X_{L}\to X$ satisfying the following conditions:
\begin{itemize}
\item $k(X_{L})=L$;

\item $\phi_{L}$ is affine;

\item $X_{L}$ is Galois (or tame Galois, strong Galois, absolute Galois, respectively) over $X$ by $\phi_{L}$;

\item For any affine open set $V\subseteq X$, the ring
$\mathcal{O}_{X}(V)$ is isomorphic to the invariant subring of $\mathcal{O}_{X_{L}}(\phi_{L}^{-1}(V))$ under the natural action of the group ${Aut(X_{L}/X)}$.
\end{itemize}
\end{definition}

We have the following existence of an arithmetic Galois cover with a higher relative dimension for any prescribed Galois extension of the function field.

\begin{theorem}
Let $X$ be an arithmetic variety and let $L$ be a Galois \emph{(}or tame Galois, strong Galois, absolute Galois, respectively\emph{)} extension of the function field $k(X)$ such that $tr.deg L/K \geqq 0$.

Then there is an arithmetic variety $X_{L}$ and a  morphism $\phi_{L}:X_{L}\to X$ such that
$X_{L}$ is Galois \emph{(}or tame Galois, strong Galois, absolute Galois, respectively\emph{)} over $X$ in the field $L$.
\end{theorem}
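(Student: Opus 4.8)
The plan is to realize $X_{L}$ as an explicit affine $X$-scheme obtained by ``adjoining a generating set'', verify by hand that it is an arithmetic variety, and then read off the Galois statement from \emph{Theorem 2.3}.

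First I would fix a set $\Delta=\Delta_{L/k(X)}$ of generators of $L$ over $k(X)$, subject to three requirements: (i) $\Delta$ is stable under the natural action of $Gal(L/k(X))$ — starting from any generating set coming from a nice basis (a linearly disjoint basis in the strong/absolute cases) one may enlarge it to the union of its conjugates; (ii) for every affine open $V\subseteq X$ one has $\mathcal{O}_{X}(V)[\Delta]\cap k(X)=\mathcal{O}_{X}(V)$, where $\mathcal{O}_{X}(V)[\Delta]\subseteq L$ is the subring generated by $\mathcal{O}_{X}(V)$ and $\Delta$; and (iii) $Spec\,\mathcal{O}_{X}(V)[\Delta]\to Spec\,\mathcal{O}_{X}(V)$ is surjective for every affine $V$ — both (ii) and (iii) being arranged by taking $\Delta$ to consist of a transcendence basis of $L/k(X)$ together with generators of the algebraic part that are integral over it. The assignment $V\mapsto\mathcal{O}_{X}(V)[\Delta]$ commutes with localization (for a basic open $V_{g}\subseteq V$ one has $\mathcal{O}_{X}(V_{g})[\Delta]=(\mathcal{O}_{X}(V)[\Delta])_{g}$), so it defines a quasi-coherent $\mathcal{O}_{X}$-algebra; I let $X_{L}=X[\Delta]$ be its relative spectrum and $\phi_{L}:X_{L}\to X$ the structure morphism, which is affine by construction and surjective by (iii).

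Next I would check that $X_{L}$ is an arithmetic variety: each $\mathcal{O}_{X}(V)[\Delta]$ is an integral domain with fraction field $k(X)(\Delta)=L$, and these patch to an integral scheme with $k(X_{L})=L$; surjectivity over $Spec\,\mathbb{Z}$ follows from that of $X\to Spec\,\mathbb{Z}$ and of $\phi_{L}$. The only delicate point is finiteness of $\dim X_{L}$: a finite chain of primes in $\mathcal{O}_{X}(V)[\Delta]$ already lies, with strict inclusions preserved, in some $\mathcal{O}_{X}(V)[S]$ with $S\subseteq\Delta$ finite, and the dimension inequality gives $\dim\mathcal{O}_{X}(V)[S]\le\dim\mathcal{O}_{X}(V)+tr.deg\,k(X)(S)/k(X)\le\dim X+tr.deg\,L/k(X)$, whence $\dim X_{L}<\infty$. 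Then $k(X_{L})=L$ and $\phi_{L}$ affine are clear. For local completeness (\emph{Definition 2.2}): since $\Delta$ is $Gal(L/k(X))$-stable and each $\rho\in Gal(L/k(X))$ fixes $\mathcal{O}_{X}(V)\subseteq k(X)$ pointwise, $\rho$ restricts to an $\mathcal{O}_{X}(V)$-automorphism of $\mathcal{O}_{X}(V)[\Delta]$, hence to a homeomorphism of $\phi_{L}^{-1}(V)$ carrying any affine open $U$ to an affine open $W\subseteq\phi_{L}^{-1}(V)$ with $\mathcal{O}_{X_{L}}(W)=\rho(\mathcal{O}_{X_{L}}(U))$ and $\rho|_{\mathcal{O}_{X_{L}}(U)}$ the desired isomorphism inducing $\rho$. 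As $L$ is Galois (resp.\ tame, strong, absolute Galois) over $k(X)$ by hypothesis, \emph{Theorem 2.3} now applies to $\phi_{L}$ and shows that $X_{L}$ is Galois (resp.\ tame, strong, absolute Galois) over $X$, so in particular $Aut(X_{L}/X)\cong Gal(L/k(X))$. Finally, the invariant subring of $\mathcal{O}_{X_{L}}(\phi_{L}^{-1}(V))=\mathcal{O}_{X}(V)[\Delta]$ under this group is $\mathcal{O}_{X}(V)[\Delta]\cap L^{Gal(L/k(X))}=\mathcal{O}_{X}(V)[\Delta]\cap k(X)=\mathcal{O}_{X}(V)$ by (ii) and the Galois hypothesis; thus all four conditions of \emph{Definition 2.5} hold and $X_{L}$ is the required cover.

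The \textbf{main obstacle} I expect is precisely the choice of $\Delta$: it has to be large enough to be $Gal(L/k(X))$-stable — this is what forces the Galois action to act through scheme automorphisms on the affine charts, hence gives local completeness — yet economical enough that adjoining it to each chart $\mathcal{O}_{X}(V)$ neither kills a prime of $\mathcal{O}_{X}(V)$ (so $\phi_{L}$ stays surjective) nor introduces a new element of $k(X)$ (needed for the invariant-subring condition), and all of this while $\Delta$ is unavoidably infinite once $L/k(X)$ is transcendental. Reconciling these demands — and, since ``arithmetic variety'' permits non-finite-type schemes, justifying the dimension inequality and the integral-closure bookkeeping in the possibly non-Noetherian setting — is the technical heart of the argument; once $\Delta$ is in hand, the rest is packaging together with the appeal to \emph{Theorem 2.3}.
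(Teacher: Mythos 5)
Your construction is essentially the one the paper uses: both adjoin a $Gal(L/k(X))$-stable generating set $\Delta_{L/k(X)}$ (the set of all $\sigma(x)$, $x\in\Delta$, $\sigma\in Gal(L/k(X))$, cf.\ the paper's \emph{Remark 4.4}) to each affine chart $\mathcal{O}_{X}(V)$, glue the resulting affine schemes, and then invoke the local-completeness criterion (\emph{Theorem 2.3} via \emph{Lemma 3.1}). The one packaging difference is that the paper first replaces $X$ by an essentially affine realization (\emph{Lemma 4.3}) and then glues the charts $Spec\,B_{\alpha}[\Delta_{L/k(Y)}]$ by hand using \emph{Lemma 4.1}, whereas you use the relative spectrum of the quasi-coherent $\mathcal{O}_{X}$-algebra $V\mapsto\mathcal{O}_{X}(V)[\Delta]$ directly; these are equivalent, and your phrasing in terms of $\underline{\mathrm{Spec}}$ is arguably cleaner.

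There is, however, a genuine unresolved tension in your conditions on $\Delta$ that you should not leave implicit. You need $\Delta$ to be (i) $Gal(L/k(X))$-stable so that each $\rho\in Gal(L/k(X))$ restricts to a ring automorphism of $\mathcal{O}_{X}(V)[\Delta]$ (this gives local completeness), and simultaneously you need (ii) $\mathcal{O}_{X}(V)[\Delta]\cap k(X)=\mathcal{O}_{X}(V)$ for every affine $V$ (so the invariant subring in \emph{Definition 2.5} comes out right) and (iii) surjectivity on $Spec$. You propose to secure (ii) and (iii) by taking $\Delta$ to consist of a transcendence basis together with algebraic generators integral over it, but then replacing $\Delta$ by the union of its $Gal$-conjugates to get (i) destroys that normal form: a conjugate $\sigma(x)$ of an integral element is integral over $\mathcal{O}_{X}(V)[\sigma(\text{transcendence basis})]$, not over the original polynomial ring, and it may well introduce new elements of $k(X)$ into $\mathcal{O}_{X}(V)[\Delta]$ (already for a quadratic extension $k(X)(\sqrt{a/b})$ with $a,b\in\mathcal{O}_{X}(V)$ and $a/b\notin\mathcal{O}_{X}(V)$, the saturated generating set brings $a/b$ into the ring). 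So (i) and (ii) are in tension, and your proof does not reconcile them; moreover (ii) must hold for all affine $V$ at once, while integral-scaling of generators is a per-$V$ device. It is worth noting that the paper's own \emph{Lemma 4.5} does not verify the invariant-subring clause of \emph{Definition 2.5} at all, so this is a gap you have correctly identified but not closed; likewise your worry about $\dim X_{L}<\infty$ is legitimate (the paper does not address it, and the dimension-inequality argument you sketch needs care outside the Noetherian, finite-type setting, and needs $tr.deg\,L/k(X)$ to actually be finite). If you want a clean fix for (ii), the natural route is to check it only for a distinguished generating set and then show the invariant subring is independent of the choice, or to weaken (ii) to an ``up to normalization'' statement matching what the construction actually produces.
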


We will prove \emph{Theorem 2.6} in \S 4.
Noticed that for a fixed field $L$, in general, $X$ can have many Galois covers $X_{L}$ in $L$ which are not isomorphic with each other (see \emph{Remarks 2.-10} below).

\begin{remark}
The conclusion of \emph{Theorem 2.6} is a generalization of the main result in \cite{0909}.
Such a result is one of the key points for us to give a computation of the \'{e}tale fundamental group of an integral scheme (see \cite{0910}).
\end{remark}

\subsection{Calculus for Galois covers of higher relative dimensions}

Now consider the calculus of Galois covers of arithmetic varieties.

\begin{definition}
(c.f. \cite{0910})
Let $Y$ be an arithmetic variety. Suppose that $L$ is a Galois (or tame Galois, strong Galois, absolute Galois, respectively) extension of the function field $k(Y)$.
We have the following \textbf{calculus of arithmetic Galois covers}
\begin{itemize}
\item $Y[\Delta]\triangleq$  a Galois (or tame Galois, strong Galois, absolute Galois, respectively)  cover $X$ of $Y$ in $L$, obtained by adding a set $\Delta\subseteq L \setminus k(Y)$ of generators of $L$ over $k(Y)$  to an essentially affine realization of $Y$ in the
 construction of Galois covers in \S 4.3 below.
\end{itemize}
\end{definition}

\begin{remark}
(c.f. \cite{0910})
Fixed an arithmetic variety $Y$ and a Galois (or tame Galois, strong Galois, absolute Galois, respectively) extension $L$ of $k(Y)$. Let $\Sigma^{\natural}[L/Y]$ be the family of sets $\Delta\subseteq L \setminus k(Y)$ of generators of $L$ over $k(Y)$.
There are the following statements.
\begin{itemize}
\item Different sets $\Delta\in \Sigma^{\natural}[L/Y]$  can produce different Galois (or tame Galois, strong Galois, absolute Galois, respectively) covers $Y[\Delta]$ of $Y$ in $L$. In general, it is not true that $Y[\Delta]$ and $Y[\Delta^{\prime}]$ are isomorphic over $Y$ for two different $\Delta,\Delta^{\prime}\in \Sigma^{\natural}[L/Y]$.

\item There is a biggest Galois (or tame Galois, strong Galois, absolute Galois, respectively) cover $Y[\Delta_{min}]$ of $Y$ in $L$,  which contains all possible points that can be added to the underlying space.

\item There is a smallest Galois (or tame Galois, strong Galois, absolute Galois, respectively) cover $Y[\Delta_{max}]$ of $Y$ in $L$ such that the underlying space is the smallest and can be regarded as that of the scheme $Y$.
\end{itemize}
\end{remark}

\begin{remark}
(c.f. \cite{0910})
Let $Y$ be an arithmetic variety and let $L$ be a  Galois (or tame Galois, strong Galois, absolute Galois, respectively) extension of $k(Y)$.
Then
\begin{equation*}
Y[\Delta\cup {\Delta}^{-1}]\cong Y[\Delta^{\prime}\cup {\Delta^{\prime}}^{-1}]
\end{equation*}
are isomorphic schemes over $Y$ for any $\Delta,\Delta^{\prime}\in \Sigma^{\natural}[L/Y]$. In particular, each $Y[\Delta\cup {\Delta}^{-1}]$ is a smallest Galois (or tame Galois, strong Galois, absolute Galois, respectively) cover of $Y$ in $L$.
 Here, ${\Delta}^{-1}=\{x^{-1}\in L:x\in \Delta\}$.
\end{remark}

\section{Proof of Theorem 2.3}

In this section we will have a new sufficient condition for geometric models of fields (see  \emph{Lemma 3.1}) (c.f. \cite{0907,1001,1004}). Note that here the approach is not essentially new. Then from the lemma a proof of
 \emph{Theorem 2.3} will be obtained immediately.

\subsection{A preparatory lemma}

Let $X$ be an arithmetic variety with generic point $\xi$. Recall that for each open set $U$ in $X$, there is a natural inclusion $$i_{X}:\mathcal{O}_{X}(U)\to k(X)\triangleq \mathcal{O}_{X,\xi}$$ between rings. We have $$k(X)=\{i_{X}(x)\in k(X):x\in \mathcal{O}_{X}(U), U \text{ is open in }X\}.$$
At the same time, $k(X)$ will be taken as the rational field $$\{[U,f]:f\in \mathcal{O}_{X}(U), U \text{ is open in }X\}$$ where $[U,f]$ denotes the germ of $(U,f)$ (see \cite{ega,gtm52,iitaka} for detail).

Now let's give a sufficient condition that for two arithmetic varieties $X/Y$, the automorphism group $Aut(X/Y)$ is naturally isomorphic to the Galois group $Gal(k(X)/k(Y))$ of the function field $k(X)$ over $k(Y)$.

\begin{lemma}
Let $X$ and $Y$ be arithmetic varieties with $\dim Y\leqq\dim X $. Suppose that $X$ is locally complete over $Y$ by a surjective morphism $\phi:X\rightarrow Y$. Then there is a natural
isomorphism
\begin{equation*}
Aut\left( X/Y\right) \cong Gal\left( k\left( X\right) /k\left( Y\right)
\right)
\end{equation*}
between groups.
\end{lemma}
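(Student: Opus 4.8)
The plan is to exhibit the isomorphism explicitly in both directions. First I would define the natural map $\Phi:\mathrm{Aut}(X/Y)\to\mathrm{Gal}(k(X)/k(Y))$. Since $X$ is integral, hence irreducible, any $\sigma\in\mathrm{Aut}(X/Y)$ is a homeomorphism of the underlying space and therefore fixes the generic point $\xi$; thus $\sigma$ induces an automorphism $\sigma^{\sharp}_{\xi}$ of the stalk $\mathcal{O}_{X,\xi}=k(X)$. From $\phi\circ\sigma=\phi$ one gets, on the generic fibre, $\sigma^{\sharp}_{\xi}\circ\phi^{\sharp}=\phi^{\sharp}:k(Y)\hookrightarrow k(X)$, so $\sigma^{\sharp}_{\xi}$ fixes the image of $k(Y)$ pointwise; hence $\Phi(\sigma):=\sigma^{\sharp}_{\xi}\in\mathrm{Gal}(k(X)/k(Y))$, and $\Phi$ is plainly a group homomorphism. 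Injectivity of $\Phi$ is where integrality enters: if $\sigma^{\sharp}_{\xi}=\mathrm{id}$, then for every affine open $U\subseteq X$ the compatibility of $\sigma^{\sharp}$ with the inclusion $i_{X}:\mathcal{O}_{X}(U)\hookrightarrow k(X)$ shows $\sigma^{\sharp}$ acts as the identity on sections, and the usual argument (a morphism of integral schemes over $Y$ that is the identity on $k(X)$ is the identity, since points are pinned down by their local rings as subrings of $k(X)$) gives $\sigma=\mathrm{id}_{X}$.

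The heart of the proof is the construction of the inverse, and this is exactly where local completeness (\emph{Definition 2.2}) is used. Fix $\rho\in\mathrm{Gal}(k(X)/k(Y))$ and choose an affine open cover $\{U_{i}\}$ of $X$ with each $U_{i}\subseteq\phi^{-1}(V_{i})$ for some affine open $V_{i}\subseteq Y$. Applying local completeness to the pair $(V_{i},U_{i})$ and to the automorphism $\rho^{-1}$ produces an affine open $W_{i}\subseteq\phi^{-1}(V_{i})$ and an $\mathcal{O}_{Y}(V_{i})$-algebra isomorphism $\widetilde{\rho^{-1}}:\mathcal{O}_{X}(U_{i})\to\mathcal{O}_{X}(W_{i})$ which, via $i_{X}$, agrees with $\rho^{-1}$; in particular $\mathcal{O}_{X}(W_{i})=\rho^{-1}(\mathcal{O}_{X}(U_{i}))$ inside $k(X)$, so $\rho$ carries $\mathcal{O}_{X}(W_{i})$ isomorphically onto $\mathcal{O}_{X}(U_{i})$. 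Taking spectra of the inverse of $\widetilde{\rho^{-1}}$ gives an isomorphism of affine schemes $\sigma_{i}:U_{i}\xrightarrow{\sim}W_{i}\subseteq X$ over $V_{i}$ whose effect on germs is precisely $\rho$. Because each $\sigma_{i}$ is completely determined by the single field automorphism $\rho$ (again using that sections of $X$ embed in $k(X)$), the $\sigma_{i}$ agree on overlaps $U_{i}\cap U_{j}$ and glue to a morphism $\sigma_{\rho}:X\to X$ over $Y$ with $(\sigma_{\rho})^{\sharp}_{\xi}=\rho$. Running the same construction with $\rho^{-1}$ yields $\sigma_{\rho^{-1}}$; since $\sigma_{\rho}\circ\sigma_{\rho^{-1}}$ and $\sigma_{\rho^{-1}}\circ\sigma_{\rho}$ both induce the identity on $k(X)$, the injectivity already proved forces them to equal $\mathrm{id}_{X}$, so $\sigma_{\rho}\in\mathrm{Aut}(X/Y)$ and $\Psi(\rho):=\sigma_{\rho}$ is well defined.

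It then remains to check that $\Psi$ is a group homomorphism and that $\Phi$ and $\Psi$ are mutually inverse. The identity $\Phi(\Psi(\rho))=\rho$ is immediate from $(\sigma_{\rho})^{\sharp}_{\xi}=\rho$; conversely $\Psi(\Phi(\sigma))$ and $\sigma$ are two automorphisms of $X$ over $Y$ inducing the same automorphism of $k(X)$, hence coincide by injectivity of $\Phi$, giving $\Psi\circ\Phi=\mathrm{id}$ and $\Phi\circ\Psi=\mathrm{id}$; multiplicativity of $\Psi$ follows in the same way. I expect the main obstacle to be the surjectivity step — upgrading an abstract field automorphism $\rho$ to an honest scheme automorphism. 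The two delicate points there are that the affine-local pieces supplied by local completeness actually cover $X$ and are mutually compatible enough to glue (this is precisely why \emph{Definition 2.2} is stated uniformly over all affine opens $U\subseteq\phi^{-1}(V)$), and that the resulting morphism genuinely lies over $Y$ and is invertible; both rest on $X$ being integral, so that every ring of sections embeds into $k(X)$ and a morphism is determined by its action there.
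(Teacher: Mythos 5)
Your proposal follows the same plan as the paper's proof of \emph{Lemma 3.1}: pass to the generic stalk to obtain a map $\mathrm{Aut}(X/Y)\to \mathrm{Gal}(k(X)/k(Y))$, prove injectivity by using that $X$ is integral (so sections embed in $k(X)$ and a morphism is pinned down by its effect on the function field), and use local completeness to build, for any $\rho\in\mathrm{Gal}(k(X)/k(Y))$, affine-local scheme isomorphisms that glue into a global automorphism over $Y$ inducing $\rho$. The paper's Steps 1--4 correspond exactly to your $\Phi$, its multiplicativity, its injectivity, and your construction of $\Psi$.

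One point does need correcting. As you have defined it, $\Phi(\sigma)=\sigma^{\sharp}_{\xi}$ is a group \emph{anti}-homomorphism, not a homomorphism, because pullback along a composite reverses the order: $(\delta\circ\sigma)^{\sharp}_{\xi}=\sigma^{\sharp}_{\xi}\circ\delta^{\sharp}_{\xi}$. So ``$\Phi$ is plainly a group homomorphism'' is not right as written, and neither, consequently, is the claimed multiplicativity of $\Psi$. The paper avoids this by sending $\sigma$ to $\langle\sigma,\sigma^{\sharp-1}\rangle$, i.e.\ to the \emph{inverse} of the stalk map, which restores covariance: $(\delta\circ\sigma)^{\sharp-1}=\delta^{\sharp-1}\circ\sigma^{\sharp-1}$. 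The fix in your argument is the same substitution, $\Phi(\sigma):=(\sigma^{\sharp}_{\xi})^{-1}$, carried through the rest of the proof; this does not change the substance of what you wrote, but it is a genuine sign error that has to be made explicit for the map to be an isomorphism rather than an anti-isomorphism.
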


\begin{proof}
(The approach here is based on a
trick originally in \cite{0907} and a refinement in \cite{0910,1001}.)
In the following we will proceed in several steps
to prove that there is  a group isomorphism
\begin{equation*}
t:Aut\left( X/Y\right) \longrightarrow Gal\left( k\left( X\right) /k\left(
Y\right) \right)\triangleq Gal\left( k\left( X\right)
/\phi^{\sharp}\left(k\left( Y\right)\right) \right)
\end{equation*}
by
\begin{equation*}
\sigma =(\sigma ,\sigma ^{\sharp })\longmapsto t(\sigma )=\left\langle
\sigma ,\sigma ^{\sharp -1}\right\rangle
\end{equation*}
where $\left\langle \sigma ,\sigma ^{\sharp -1}\right\rangle $ is the map of
$k(X)$ into $k(X)$ given by
\begin{equation*}
[ U,f] \in k\left( X\right)
\longmapsto [ \sigma \left( U\right) ,\sigma ^{\sharp -1}\left(
f\right) ] \in  k\left( X\right)
\end{equation*}
for any open set $U$ in $X$ and any element $f\in \mathcal{O}_{X}(U)$.

Here
the function field $k(X)$ is taken canonically as the set of equivalence classes $[U,f]$ of elements of the
form ${\left( U,f\right)}$ such that $U$ is an open set in $X$ and $f$ is contained in $\mathcal{O}_{X}(U)$.

\textbf{Step 1.} Construct a map $$t:Aut\left( X/Y\right) \longrightarrow Gal\left( k\left( X\right) /\phi^{\sharp}(k\left(
Y\right) )\right).$$

In fact, fixed any automorphism $$ \sigma =\left( \sigma
,\sigma ^{\sharp}\right) \in Aut_{k}\left( X/Y\right) . $$ That is to say, $$
\sigma : X \longrightarrow X $$ is a homeomorphism; $$ \sigma ^
{\sharp}:\mathcal{O}_{X} \rightarrow \sigma _{\ast }\mathcal{O}_{X}
$$ is an isomorphism of sheaves of rings on $X$.

As $\dim X<\infty$,
we have $\sigma (\xi)=\xi$. Then
\begin{equation*}
\sigma ^{\sharp}:k\left( X\right)=\mathcal{O}_{X,\xi } \rightarrow \sigma
_{\ast }\mathcal{O}_{X,\xi }=k\left( X\right)
\end{equation*}
is an automorphism of $k(X)$.

Denote by $\sigma ^{\sharp-1}$ the inverse of the ring isomorphism $$\sigma ^{\sharp}:k(X)\to k(X).$$

Take any open subset $U$ of $X$. We have the restriction
\begin{equation*}
\sigma=(\sigma ,\sigma ^{\sharp}): (U,\mathcal{O}_{X}|_{U}) \longrightarrow
(\sigma(U),\mathcal{O}_{X}|_{\sigma(U)})
\end{equation*}
of open subschemes. That is,
\begin{equation*}
\sigma^{\sharp}:\mathcal{O}_{X}|_{\sigma(U)} \rightarrow \sigma_{\ast}%
\mathcal{O}_{X}|_{U}
\end{equation*}
is an isomorphism of sheaves on $\sigma (U)$. In particular,
\begin{equation*}
\sigma^{\sharp}:\mathcal{O}_{X}(\sigma(U))
=\mathcal{O}_{X}|_{\sigma(U)}(\sigma(U)) \rightarrow \mathcal{O}_{X}(U)=\sigma_{\ast}
\mathcal{O}_{X}|_{U}(\sigma(U))
\end{equation*}
is an isomorphism of rings.

For every $f \in \mathcal{O}_{X}|_{U}(U)$, we have
\begin{equation*}
f \in \sigma_{\ast}\mathcal{O}_{X}|_{U}(\sigma(U));
\end{equation*}
hence
\begin{equation*}
\sigma^{\sharp -1}(f) \in \mathcal{O}_{X}(\sigma(U)).
\end{equation*}

Now define a set mapping
\begin{equation*}
t:Aut_{k}\left( X/Y\right) \longrightarrow Gal\left( k\left( X\right)
/\phi^{\sharp}(k\left( Y\right) )\right)
\end{equation*}
given by
\begin{equation*}
\sigma =(\sigma ,\sigma ^{\sharp})\longmapsto t(\sigma)=\left\langle \sigma ,\sigma ^{\sharp
-1}\right\rangle
\end{equation*}
such that
\begin{equation*}
\left\langle \sigma ,\sigma ^{\sharp-1}\right\rangle :[ U,f]
\longmapsto [ \sigma \left( U\right) ,\sigma ^{\sharp-1}\left( f\right)
]
\end{equation*}
is a mapping of $k(X)$ into $k(X)$.

\textbf{Step 2.} Prove that $$t:Aut\left( X/Y\right) \longrightarrow Gal\left( k\left( X\right) /\phi^{\sharp}(k\left(
Y\right)) \right)$$ is a group homomorphism.

In deed, given any
\begin{equation*}
\sigma =\left( \sigma ,\sigma ^{\sharp}\right) \in Aut\left( X/Y\right).
\end{equation*}
For any $[U,f],[V,g] \in k(X)$, we have
\begin{equation*}
[U,f]+[V,g]=[U\cap V, f+g]
\end{equation*}
and
\begin{equation*}
[U,f]\cdot [V,g]=[U\cap V, f\cdot g];
\end{equation*}
then we have
\begin{equation*}
\begin{array}{l}
\left\langle \sigma ,\sigma ^{\sharp-1}\right\rangle([U,f]+[V,g]) \\
=\left\langle \sigma ,\sigma ^{\sharp-1}\right\rangle([U\cap V, f+g]) \\
=[\sigma(U\cap V), \sigma^{\sharp -1}(f+g)] \\
=[\sigma(U\cap V), \sigma^{\sharp -1}(f)]+[\sigma(U\cap V), \sigma^{\sharp
-1}(g)] \\
=[\sigma(U), \sigma^{\sharp -1}(f)]+[\sigma(V), \sigma^{\sharp -1}(g)]\\
=\left\langle \sigma ,\sigma ^{\sharp-1}\right\rangle([U,f])+ \left\langle
\sigma ,\sigma ^{\sharp-1}\right\rangle([V,g])
\end{array}
\end{equation*}
and
\begin{equation*}
\begin{array}{l}
\left\langle \sigma ,\sigma ^{\sharp-1}\right\rangle([U,f]\cdot[V,g]) \\
=\left\langle \sigma ,\sigma ^{\sharp-1}\right\rangle([U\cap V, f\cdot g])
\\
=[\sigma(U\cap V), \sigma^{\sharp -1}(f\cdot g)] \\
=[\sigma(U\cap V), \sigma^{\sharp -1}(f)]\cdot[\sigma(U\cap V),
\sigma^{\sharp -1}(g)] \\
=[\sigma(U), \sigma^{\sharp -1}(f)]\cdot[\sigma(V), \sigma^{\sharp -1}(g)]
\\
=\left\langle \sigma ,\sigma ^{\sharp-1}\right\rangle([U,f])\cdot
\left\langle \sigma ,\sigma ^{\sharp-1}\right\rangle([V,g]).
\end{array}
\end{equation*}

It follows that $\left\langle \sigma ,\sigma ^{\sharp-1}\right\rangle$ is an
automorphism of $k\left( X\right) .$

It needs to prove that $\left\langle \sigma ,\sigma
^{\sharp-1}\right\rangle$ is an isomorphism over
$\phi^{\sharp}(k(Y))$. Consider the given morphism
$$\phi=(\phi,\phi^{\sharp}):(X,\mathcal{O} _{X})\rightarrow
(Y,\mathcal{O}_{Y})$$ of schemes. It follows that $\phi(\xi)$ is the
generic point of $Y$ and $\xi$ is invariant under any automorphism
$\sigma \in Aut\left( X/Y\right)$. Then $\sigma^{\sharp}:
\mathcal{O}_{X,\xi} \rightarrow \mathcal{O}_{X,\xi}$ is an
isomorphism of algebras over
$\phi^{\sharp}(\mathcal{O}_{Y,\phi(\xi)})=\phi^{\sharp}(k(Y))$.
Hence, $$\left\langle \sigma ,\sigma
^{\sharp-1}\right\rangle|_{\phi^{\sharp}(k(Y))}=id_{\phi^{\sharp}(k(Y))}.$$

This proves
$$
\left\langle \sigma ,\sigma ^{\sharp-1}\right\rangle \in Gal\left( k\left(
X\right) /\phi^{\sharp}(k\left( Y\right) \right)) .
$$ So, as a map of sets, $t$ is  well-defined.

Prove that $t$ is a homomorphism between groups. In fact, take any
\begin{equation*}
\sigma =\left( \sigma ,\sigma ^{\sharp}\right) ,\delta =\left( \delta
,\delta ^{\sharp}\right) \in Aut\left( X/Y\right) .
\end{equation*}
By preliminary facts on schemes (see \cite{ega,gtm52,iitaka}) we have
\begin{equation*}
\delta ^{\sharp -1}\circ \sigma^{\sharp -1}=(\delta \circ \sigma)^{\sharp -1};
\end{equation*}
then
\begin{equation*}
\left\langle \delta ,\delta ^{\sharp-1}\right\rangle \circ \left\langle
\sigma ,\sigma ^{\sharp-1}\right\rangle =\left\langle \delta \circ \sigma
,\delta ^{\sharp-1}\circ \sigma ^{\sharp-1}\right\rangle.
\end{equation*}

Hence, the map
\begin{equation*}
t:Aut\left( X/Y\right) \rightarrow Gal\left( k\left( X\right)
/\phi^{\sharp}\left(k\left( Y\right)\right) \right)
\end{equation*}
is a homomorphism of groups.

\textbf{Step 3.} Prove that $$t:Aut\left( X/Y\right) \longrightarrow Gal\left( k\left( X\right) /\phi^{\sharp}(k\left(
Y\right)) \right)$$ is an injective homomorphism.

In deed, assume $\sigma ,\sigma
^{\prime }\in {Aut}\left( X/Y\right) $ such that $t\left( \sigma
\right) =t\left( \sigma ^{\prime }\right) .$ We have
\begin{equation*}
[ \sigma \left( U\right) ,\sigma ^{\sharp-1}\left( f\right)]
=[ \sigma ^{\prime }\left( U\right) ,\sigma ^{\prime \sharp-1}\left(
f\right) ]
\end{equation*}
for any $[ U,f] \in k\left( X\right) .$ In particular, we
have
\begin{equation*}
\left( \sigma \left( U_{0}\right) ,\sigma ^{\sharp-1}\left( f\right) \right)
=\left( \sigma ^{\prime }\left( U_{0}\right) ,\sigma ^{\prime
\sharp-1}\left( f\right) \right)
\end{equation*}
for any $f\in \mathcal{O}_{X}(U_{0})$ and any affine open subset
$U_{0}$ of $X$ such that $ \sigma \left( U_{0}\right)$ and $\sigma
^{\prime }\left( U_{0}\right)$ are both contained in $\sigma \left(
U\right) \cap \sigma ^{\prime }\left( U\right) $.

By preliminary facts on affine schemes (see \cite{ega,gtm52,iitaka}) again, it is seen
that
$$\sigma |_{U_{0}}=\sigma ^{\prime }|_{U_{0}}$$ holds as
isomorphisms of schemes. Let $U_0$ run through all affine open sets of $X$, we have
$$\sigma = \sigma ^{\prime }$$ on the whole of $X$.

Hence, $t$ is an injection.

\textbf{Step 4.} Prove that $$t:Aut\left( X/Y\right) \longrightarrow Gal\left( k\left( X\right) /\phi^{\sharp}(k\left(
Y\right)) \right)$$ is a surjective homomorphism.

In fact, fixed any element
$\rho$ of the group $Gal\left( k\left( X\right)
/\phi^{\sharp}\left(k\left( Y\right)\right) \right) $.

As $$k(X)=\{[U_{f},f]:f\in \mathcal{O}_{X}(U_{f})\text{ and
}U_{f}\subseteq X\text{ is open}\},$$ we have
\begin{equation*}
\rho :[ U_{f},f] \in k\left( X\right) \longmapsto [ U_{\rho
\left( f\right) },\rho \left( f\right)] \in k\left( X\right)
\end{equation*}
according to \emph{Proposition 1.44} of \cite{iitaka},
where $U_{f}$ and $U_{\rho (f)}$ are open sets in $X$, $f$ is
contained in $\mathcal{O}_{X}(U_{f})$, and $\rho (f)$ is contained
in $ \mathcal{O}_{X}(U_{\rho (f)})$.

We will proceed in several sub-steps to prove that for
each element $$\rho\in Gal\left( k\left( X\right)
/\phi^{\sharp}\left(k\left( Y\right)\right) \right) $$ there is a
unique element $$\lambda\in {Aut}(X/Y)$$ such that $$t(\lambda)=\rho.$$

\textbf{Substep 4-a.} Fixed any affine open set $V$ of $Y$. Show that for
each affine open set $U\subseteq \phi^{-1}(V)$ there is an affine
open set $U_{\rho}$ in $X$ such that $\rho$ determines an
isomorphism between affine schemes $(U,\mathcal{O}_{X}|_{U})$ and
$(U_{\rho},\mathcal{O}_{X}|_{U_{\rho}})$.

In fact, take any affine open sets $V\subseteq Y$ and $U\subseteq \phi^{-1}(V)$. We have
\begin{equation*}
A\triangleq \mathcal{O}_{X}(U) =\{\left( U_{f},f\right):[ U_{f},f] \in k\left( X\right)
,U_{f}\supseteq U\}.
\end{equation*}
Put
\begin{equation*}
B\triangleq\{\left( U_{\rho \left( f\right) },\rho \left( f\right) \right) :[U_{\rho \left( f\right) },\rho \left( f\right)]\in
k\left( X\right),\left( U_{f},f\right) \in A
\}.
\end{equation*}

Then there exists an affine open set $W\subseteq \phi^{-1}(V)$  such that $$B=\mathcal{O}_{X}(W) $$ from the assumption that $X$ is locally complete over $Y$. Write $U_{\rho}\triangleq W$.

Therefore, by $\rho$ we have a unique isomorphism
\begin{equation*}
\lambda_{U}=\left(\lambda_{U}, \lambda_{U}^{\sharp} \right): (U, \mathcal{O}%
_{X}|_{U}) \rightarrow (U_{\rho}, \mathcal{O}_{X}|_{U_{\rho}})
\end{equation*}
of the affine open subscheme in $X$ such that
\begin{equation*}
\rho |_{\mathcal{O}_{X}(U)}=\lambda_{U}^{\sharp -1}: \mathcal{O}_{X}(U)
\rightarrow \mathcal{O}_{X}(U_{\rho}).
\end{equation*}

\textbf{Substep 4-b.} Take any affine open sets $V\subseteq Y$ and
$U,U^{\prime}\subseteq\phi^{-1}(V)$. Show that
\begin{equation*}
\lambda_{U}|_{U\cap U^{\prime}}=\lambda_{U^{\prime}}|_{U\cap U^{\prime}}
\end{equation*}
holds as morphisms of schemes.

In fact, by the above construction in $\emph{Substep 4-a}$, for each $\lambda_{U}$ it is seen
that $\lambda_{U}^{\sharp}$ and $\lambda^{\sharp}_{U^{\prime}}$
coincide restricted to the intersection $U\cap U^{\prime}$ as homomorphisms of rings since we have
\begin{equation*}
\rho |_{\mathcal{O}_{X}(U\cap U^{\prime})}=\lambda_{U}|_{U\cap U^{\prime}}^{\sharp -1}:
\mathcal{O}_{X}(U\cap U^{\prime}) \rightarrow \mathcal{O}_{X}({(U\cap U^{\prime})}_{\rho});
\end{equation*}
\begin{equation*}
\rho |_{\mathcal{O}_{X}(U\cap U^{\prime})}=\lambda_{U^{\prime}}|_{U\cap U^{\prime}}^{\sharp -1}:
\mathcal{O}_{X}(U\cap U^{\prime}) \rightarrow \mathcal{O}_{X}({(U\cap U^{\prime})}_{\rho}).
\end{equation*}

On the other hand, for any  point $x\in U\cap U^{\prime}$, we must have
$$\lambda_{U}(x)=\lambda_{U^{\prime}}(x).$$

Otherwise, if
$\lambda_{U}(x)\not=\lambda_{U^{\prime}}(x)$, will have an affine
open subset $X_0$ of $X$ that contains either of the points
$\lambda_{U}(x)$ and $\lambda_{U^{\prime}}(x)$ but does not contain
the other since the underlying space of $X$ is a Kolmogorov space (see \cite{ega,gtm52,iitaka}).
Assume $\lambda_{U}(x)\in X_{0}$ and $\lambda_{U^{\prime}}(x)\not
\in X_{0}$. We choose an affine open subset $U_{0}$ of $X$ such that
$x\in U_{0}\subseteq U\cap U^{\prime}$ and
$\lambda_{U}(U_{0})\subseteq X_{0}$ since we have
$$\lambda_{U}(U\cap U^{\prime})={(U\cap U^{\prime})}_{\rho}\subseteq
U_{\rho};$$
$$\lambda_{U^{\prime}}(U\cap U^{\prime})={(U\cap
U^{\prime})}_{\rho}\subseteq U^{\prime}_{\rho}.$$ However, for each $\lambda_{U}$, we have
$$\lambda_{U}(U_{0})=(U_{0})_{\rho}=\lambda_{U^{\prime}}(U_0);$$
then $$\lambda_{U^{\prime}}(x)\in (U_{0})_{\rho}\subseteq X_{0},$$
where there will be a contradiction. Hence, $\lambda_{U}$ and
$\lambda_{U^{\prime}}$ coincide on $U\cap U^{\prime}$ as mappings of topological
spaces.

\textbf{Substep 4-c.} By gluing the $\lambda_{U}$ along all such affine open
subsets $U$, we have a homeomorphism $\lambda$ of $X$ onto $X$ as a
topological space given by
\begin{equation*}
\lambda: x\in X \mapsto \lambda_{U}(x)\in X
\end{equation*}
where $x$ belongs to $U$ and $U$ is an affine open subset of $X$. That
is, $ \lambda|_{U}=\lambda_{U}. $

By $\emph{Substep 4-b}$ it is seen that
$\lambda $ is well-defined. Then we have an
automorphism, namely $\lambda$, of the scheme $(X,\mathcal{O}_{X})$ (see \cite{ega,gtm52,iitaka}).

\textbf{Substep 4-d.} Show that $\lambda$ is contained in $Aut\left( X/Y\right)$ satisfying
$$ t\left(\lambda\right)=\rho.$$

In deed, as $\rho$ is an isomorphism
of $k(X)$ over $\phi^{\sharp}\left(k(Y)\right)$, the
isomorphism $\lambda_{U}$ is over $Y$ by $\phi$ for any affine open
subset $U$ of $X$; then $\lambda$ is an automorphism of $X$ over $Y$
by $\phi$ such that $t\left(\lambda\right)=\rho$ holds.

This proves that  for each $\rho \in Gal\left( k\left(
X\right) /\phi^{\sharp}\left(k\left( Y\right)\right) \right) $,
there exists $\lambda\in Aut\left( X/Y\right) $
such that $$t(\lambda)=\rho.$$
Hence, ${t}$ is surjective.

By \emph{Steps 1-4} above, it is seen that $$t:Aut\left( X/Y\right) \longrightarrow Gal\left( k\left( X\right) /\phi^{\sharp}(k\left(
Y\right)) \right)$$ is an isomorphism between groups.
This completes the proof.
\end{proof}

\subsection{Proof of Theorem 2.3}

Now we give the proof of \emph{Theorem 2.3}.

\begin{proof}
(\textbf{Proof of Theorem 2.3.})
It is immediately from \emph{Definition 2.1} and \emph{Lemma 3.1}.
\end{proof}

\section{Proof of Theorem 2.6}

Let $X$ be an arithmetic variety and let $L$ be any extension of the function field $k(X)$.
In this section we will give a universal construction for an arithmetic variety $X_{L}$, a general cover  over $X$, such that $L$ is equal the function field $k(X_{L})$ (see \emph{Lemma 4.5}).  Then by the lemma we will obtain a proof of  \emph{Theorem 2.6}.

\subsection{Gluing schemes}

Let $X$ be an irreducible topological space and let $\{U_{\alpha}\}_{\alpha\in \Gamma}$ be a family of open sets in $X$ such that $X\subseteq {\bigcup_{\alpha\in \Gamma}} U_{\alpha}$. Suppose that for every $U_{\alpha}$, there is an integral domain $A_{\alpha}$ and a homeomorphism $\phi_{\alpha}$ of $U_{\alpha}$ onto the underlying space of the affine scheme $Spec A_{\alpha}$. Denote by $(U_{\alpha}, \mathcal{F}_{\alpha})$ the schemes induced from $\phi_{\alpha}$ for each $\alpha\in \Gamma$. For every $\alpha,\beta,\gamma\in \Gamma$, put
$$U_{\alpha \beta}=U_{\alpha}\bigcap U_{\beta};$$ $$U_{\alpha \beta\gamma}=U_{\alpha}\bigcap U_{\beta}\bigcap U_{\gamma}.$$

\begin{lemma}
Suppose that for every $\alpha,\beta \in \Gamma$, there is an isomorphism $$\phi_{\alpha\beta}:(U_{\alpha \beta}, \mathcal{F}_{\alpha}\mid _{U_{\alpha \beta}})\cong (U_{\alpha \beta}, \mathcal{F}_{\beta}\mid _{U_{\alpha \beta}})$$ between schemes satisfying the \textbf{cocycle condition}:
$$\phi_{\alpha\alpha}=id:(U_{\alpha}, \mathcal{F}_{\alpha})\to (U_{\alpha}, \mathcal{F}_{\alpha});$$
$$\phi_{\beta\gamma}\circ\phi_{\alpha\beta}=\phi_{\alpha\gamma}:(U_{\alpha \beta\gamma}, \mathcal{F}_{\alpha}\mid _{U_{\alpha \beta\gamma}})\to (U_{\alpha \beta\gamma}, \mathcal{F}_{\gamma}\mid _{U_{\alpha \beta\gamma}})$$
for every $\alpha,\beta,\gamma\in \Gamma$.

Then there exists a sheaf $\mathcal{F}$ on the space $X$ and a family of scheme isomorphisms $\psi_{\alpha}:(U_{\alpha},\mathcal{F}\mid_{U_{\alpha}})\to (U_{\alpha}, \mathcal{F}_{\alpha})$ such that $\phi_{\alpha\beta}=\psi_{\beta}\circ\psi_{\alpha}^{-1}$ for any $\alpha,\beta \in \Gamma$; moreover, the scheme $(X,\mathcal{F})$ is uniquely determined upon isomorphisms.

In particular, let $\phi_{\alpha}=id_{U_{\alpha}}$ for each $\alpha\in \Gamma$. Then in addition, the sheaf $\mathcal{F}$ can be such that  $\mathcal{F}\mid_{U_{\alpha}}=\mathcal{F}_{\alpha}$ for any $\alpha\in \Gamma$.
\end{lemma}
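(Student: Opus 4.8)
The plan is to carry out the standard gluing construction for ringed spaces, as in \cite{ega,gtm52,iitaka}, and then check that the glued object is in fact a scheme by using the hypothesis that each $(U_\alpha,\mathcal{F}_\alpha)$ is locally affine. First I would construct the sheaf $\mathcal{F}$ on $X$: for each open $W\subseteq X$, set
\begin{equation*}
\mathcal{F}(W)\triangleq\Bigl\{(s_\alpha)_{\alpha\in\Gamma}\in\textstyle\prod_{\alpha}\mathcal{F}_\alpha(W\cap U_\alpha):\phi_{\alpha\beta}^\sharp(s_\beta|_{W\cap U_{\alpha\beta}})=s_\alpha|_{W\cap U_{\alpha\beta}}\text{ for all }\alpha,\beta\Bigr\},
\end{equation*}
with the obvious restriction maps. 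The cocycle condition $\phi_{\alpha\alpha}=\mathrm{id}$ and $\phi_{\beta\gamma}\circ\phi_{\alpha\beta}=\phi_{\alpha\gamma}$ is exactly what is needed to make this a well-defined presheaf of rings; the sheaf axioms (gluing and separatedness) follow from those of each $\mathcal{F}_\alpha$ together with the fact that $\{U_\alpha\}$ is an open cover of $X$. This is routine and I would not belabor it.

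Next I would produce the isomorphisms $\psi_\alpha:(U_\alpha,\mathcal{F}|_{U_\alpha})\to(U_\alpha,\mathcal{F}_\alpha)$. On $U_\alpha$, a section $(s_\beta)_\beta$ of $\mathcal{F}$ over $W\subseteq U_\alpha$ is determined by its component $s_\alpha$, because for any other $\beta$ the compatibility relation forces $s_\beta|_{W\cap U_{\alpha\beta}}$ to equal $\phi_{\beta\alpha}^\sharp(s_\alpha|_{W\cap U_{\alpha\beta}})$ (here I use $\phi_{\beta\alpha}=\phi_{\alpha\beta}^{-1}$, which is part of the cocycle data), and sections of a sheaf are determined locally. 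This gives the component projection $\psi_\alpha^\sharp:(s_\beta)_\beta\mapsto s_\alpha$ as an isomorphism of sheaves on $U_\alpha$ covering the identity map of spaces, and one checks directly that $\psi_\beta\circ\psi_\alpha^{-1}=\phi_{\alpha\beta}$ on $U_{\alpha\beta}$ by tracing through the definitions. At this point $(X,\mathcal{F})$ is a ringed space locally isomorphic to each $(U_\alpha,\mathcal{F}_\alpha)$, hence locally isomorphic to $\operatorname{Spec}A_\alpha$; since being a scheme is a property local on the source, $(X,\mathcal{F})$ is a scheme.

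For uniqueness up to isomorphism: if $(X,\mathcal{F}')$ with isomorphisms $\psi'_\alpha$ also satisfies $\phi_{\alpha\beta}=\psi'_\beta\circ(\psi'_\alpha)^{-1}$, then the maps $(\psi'_\alpha)^{-1}\circ\psi_\alpha:(U_\alpha,\mathcal{F}|_{U_\alpha})\to(U_\alpha,\mathcal{F}'|_{U_\alpha})$ agree on overlaps $U_{\alpha\beta}$—precisely because both factorizations of $\phi_{\alpha\beta}$ are the same—and so glue to a global isomorphism $(X,\mathcal{F})\cong(X,\mathcal{F}')$. Finally, for the last assertion, when $\phi_\alpha=\mathrm{id}_{U_\alpha}$ one may take $\psi_\alpha=\mathrm{id}$ and read off from the description of $\mathcal{F}(W)$ that for $W\subseteq U_\alpha$ the projection $\psi_\alpha^\sharp$ is literally the identity, giving $\mathcal{F}|_{U_\alpha}=\mathcal{F}_\alpha$ on the nose.

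I expect the only genuine subtlety to be bookkeeping: keeping the sheaf maps $\phi_{\alpha\beta}^\sharp$ pointed in a consistent direction (the statement writes $\phi_{\alpha\beta}:(U_{\alpha\beta},\mathcal{F}_\alpha|_{U_{\alpha\beta}})\cong(U_{\alpha\beta},\mathcal{F}_\beta|_{U_{\alpha\beta}})$, so $\phi_{\alpha\beta}^\sharp$ runs $\mathcal{F}_\beta|_{U_{\alpha\beta}}\to\mathcal{F}_\alpha|_{U_{\alpha\beta}}$) and verifying the triple-overlap compatibility of the $\psi_\alpha$ against the cocycle identity on $U_{\alpha\beta\gamma}$. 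There is no real obstacle beyond this; the hypotheses have been arranged so that the construction goes through verbatim as the classical gluing lemma for schemes.
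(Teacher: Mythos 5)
Your construction of $\mathcal{F}$ via compatible tuples, the definition of the $\psi_{\alpha}$ as component projections, the verification that $(X,\mathcal{F})$ is a scheme, and the uniqueness argument are all the standard gluing construction, which the paper simply outsources to the references; that part of your proposal is fine and matches the paper's treatment of the first two assertions.

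The gap is in your last paragraph. In your construction, for $W\subseteq U_{\alpha}$ the set $\mathcal{F}(W)$ is a subset of $\prod_{\beta}\mathcal{F}_{\beta}(W\cap U_{\beta})$ consisting of compatible tuples, while $\mathcal{F}_{\alpha}(W)$ is a different set, namely the ring furnished by the affine scheme structure on $U_{\alpha}$. The projection onto the $\alpha$-component is a bijection between these, not the identity, so one cannot simply ``take $\psi_{\alpha}=\mathrm{id}$.'' The hypothesis $\phi_{\alpha}=\mathrm{id}_{U_{\alpha}}$ only fixes the homeomorphism of $U_{\alpha}$ with the underlying space of $Spec\,A_{\alpha}$; it says nothing about the sheaf and does not upgrade the canonical isomorphism $\mathcal{F}|_{U_{\alpha}}\cong\mathcal{F}_{\alpha}$ to the literal equality $\mathcal{F}|_{U_{\alpha}}=\mathcal{F}_{\alpha}$ that the lemma asserts. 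So your final sentence restates the claim rather than proving it.

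That literal equality is precisely what the paper's proof is about. Starting from the $\mathcal{F}$ and $\psi_{\alpha}$ produced by standard gluing, the paper builds a replacement sheaf $\mathcal{F}^{\prime}$ inside the stalks: for $x\in X$ it forms $\mathcal{F}^{\prime}_{x}$ as the direct limit of $\mathcal{F}_{\alpha}(U)$ over affine opens $U\ni x$ contained in some $U_{\alpha}$, and $\mathcal{F}_{x}$ as the stalk of $\mathcal{F}$; because the rings $A_{\alpha}$ are integral domains on an irreducible space, the canonical maps $\mathcal{F}_{\alpha}(U)\to\mathcal{F}^{\prime}_{x}$ and $\mathcal{F}(U)\to\mathcal{F}_{x}$ are injective, so sections may be identified with their images in the stalks. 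The $\psi_{\alpha}$ induce isomorphisms $\psi_{x}:\mathcal{F}_{x}\to\mathcal{F}^{\prime}_{x}$, and one sets $\mathcal{F}^{\prime}(U)\triangleq\psi_{x}(\mathcal{F}(U))$ for any $x\in U$, checking this is independent of $x$. For $U\subseteq U_{\alpha}$ this image is exactly $\mathcal{F}_{\alpha}(U)$, giving $\mathcal{F}^{\prime}|_{U_{\alpha}}=\mathcal{F}_{\alpha}$ on the nose. To close your gap you would need to carry out some such re-realization of the glued sheaf; the equalizer description alone cannot deliver equality of sheaves.
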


\begin{proof}
For the existence and the uniqueness of the sheaf $\mathcal{F}$, it is trivial from \cite{grauert,ega,gtm52,iitaka,serre}.

Now assume $\phi_{\alpha}=id_{U_{\alpha}}$ for each $\alpha\in \Gamma$. Let $\mathcal{F}$ be a sheaf on $X$ determined under the cocycle condition such that
$$\psi_{\alpha}:(U_{\alpha},\mathcal{F}\mid_{U_{\alpha}})\cong (U_{\alpha}, \mathcal{F}_{\alpha})$$ for every $\alpha\in \Gamma$.

In the following prove that there is a sheaf $\mathcal{F}^{\prime}$ on $X$ determined under the cocycle condition such that  in addition we have  $$\mathcal{F}^{\prime}\mid_{U_{\alpha}}= \mathcal{F}^{\prime}_{\alpha}= \mathcal{F}_{\alpha}$$ for any $\alpha\in \Gamma$.

In fact, fixed any  point $x\in X$. Put
$$\mathcal{U}_{x}=\text{ the family of open sets in} X\text{ that contains the point} x;$$
$$\mathcal{U}^{\Gamma}_{x}=\text{ the family of affine open sets in } U_{\alpha}\text{ with }\alpha\in \Gamma\text{ that contains the point }x.$$

By set-inclusion, $\mathcal{U}^{\Gamma}_{x}$ and $\mathcal{U}_{x}$ are two directed sets. In particular, $\mathcal{U}^{\Gamma}_{x}$ is  cofinal in $\mathcal{U}_{x}$ since $\{U_{\alpha}\}_{\alpha\in \Gamma}$ is an open covering of $X$. Then we have the direct limits
$$\mathcal{F}^{\prime}_{x}\triangleq\lim_{U_{\alpha}\supseteq U\in\mathcal{U}^{\Gamma}_{x}}\mathcal{F}_{\alpha}(U);$$
$$\mathcal{F}_{x}=\lim_{U\in\mathcal{U}_{x}}\mathcal{F}(U)=\lim_{U_{\alpha}\supseteq U\in\mathcal{U}^{\Gamma}_{x}}\mathcal{F}(U).$$ It is clear that the canonical maps $\mathcal{F}_{\alpha}(U)\to \mathcal{F}^{\prime}_{x}$ and $\mathcal{F}_{\alpha}(U)\to \mathcal{F}_{x}$ are both injective. Hence, we can identify them with their images, respectively.

Via $\{\psi_{\alpha}\}_{\alpha\in\Gamma}$, there is an induced isomorphism $$\psi_{x}:\mathcal{F}_{x}\cong \mathcal{F}^{\prime}_{x}$$ between the direct limits according to preliminary facts on direct limits of direct systems of rings.

For every open set $U$ in $X$, define $$\mathcal{F}^{\prime}(U)=\psi_{x}(\mathcal{F}(U)),$$ i.e., $\mathcal{F}^{\prime}(U)$ is the natural image of $\mathcal{F}(U)$ by the map $\psi_{x}$, where $x$ is a point contained in $U$. It is independent of the choice of the point $x$.

Then $\mathcal{F}^{\prime}$ is a sheaf on $X$ satisfying the desired property.
\end{proof}

\subsection{Essentially affine schemes}

Here we give a type of schemes that behave almost like affine schemes.

\begin{definition}
(\cite{0909,0910})
An arithmetic variety $X$ is said to be \textbf{essentially affine} in a field $\Omega $ if every affine open set $U$ of $X$
satisfies the two properties: $$\mathcal{O}_{X}(U)\subseteq \Omega ;$$ $$U=Spec(\mathcal{O}
_{X}(U)).$$
\end{definition}

Here we give such a fact that every arithmetic variety $X$ is isomorphic to an
arithmetic variety $Z$ that is essentially affine in the field $k(X)$.

\begin{lemma}
\emph{(\cite{0909,0910})}
For any arithmetic variety $X$, there is an arithmetic variety $Z$ satisfying the
properties:
\begin{itemize}
\item $k\left( X\right) =k\left( Z\right);$

\item $X\cong Z$ are isomorphic schemes;

\item $Z$ is essentially affine in the field $k(Z)$.
\end{itemize}
\end{lemma}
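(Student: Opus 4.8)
The plan is to rebuild $X$ from affine charts that are genuine spectra of subrings of the function field $k(X)$, and then to assemble those charts into a scheme isomorphic to $X$ by invoking the gluing lemma (\emph{Lemma 4.2}). First I would fix an affine open cover $\{U_{\alpha}\}_{\alpha\in\Gamma}$ of $X$ (with $\Gamma$ possibly infinite, since $X$ need not be quasi-compact). Because $X$ is integral with generic point $\xi$, the canonical restriction map $\mathcal{O}_{X}(U_{\alpha})\to \mathcal{O}_{X,\xi}=k(X)$ is injective; write $A_{\alpha}\subseteq k(X)$ for its image. Localizing $A_{\alpha}$ at its zero ideal gives back $\mathcal{O}_{X,\xi}$, so $\operatorname{Frac}(A_{\alpha})=k(X)$ for every $\alpha$; in particular the $A_{\alpha}$ are integral domains sharing the one common fraction field $k(X)$. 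Identifying $\mathcal{O}_{X}(U_{\alpha})$ with $A_{\alpha}$ yields a canonical scheme isomorphism $\theta_{\alpha}\colon (U_{\alpha},\mathcal{O}_{X}|_{U_{\alpha}})\xrightarrow{\ \sim\ } Spec\,A_{\alpha}$.

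Next I would transport the (trivial) gluing data of $X$ through the $\theta_{\alpha}$: on overlaps, $\theta_{\alpha}(U_{\alpha}\cap U_{\beta})$ is a nonempty open subscheme of $Spec\,A_{\alpha}$ (nonempty since $X$ is irreducible), and $\phi_{\alpha\beta}\triangleq\theta_{\beta}\circ\theta_{\alpha}^{-1}$ is an isomorphism onto $\theta_{\beta}(U_{\alpha}\cap U_{\beta})\subseteq Spec\,A_{\beta}$; these satisfy $\phi_{\alpha\alpha}=\mathrm{id}$ and the cocycle identity $\phi_{\beta\gamma}\circ\phi_{\alpha\beta}=\phi_{\alpha\gamma}$ on triple overlaps, inherited from $X$. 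Gluing the underlying spaces $|Spec\,A_{\alpha}|$ along the homeomorphisms underlying the $\phi_{\alpha\beta}$ produces an irreducible space carrying each $|Spec\,A_{\alpha}|$ as an open piece, with which the hypotheses of \emph{Lemma 4.2} (including the ``$\phi_{\alpha}=\mathrm{id}$'' clause) are met; this gives a sheaf $\mathcal{O}_{Z}$ whose restriction to the $\alpha$-chart is the structure sheaf of $Spec\,A_{\alpha}$, and the resulting scheme $Z$ is glued from exactly the same data as $X$, so $Z\cong X$. Being isomorphic to $X$, $Z$ is an integral finite-dimensional scheme surjective over $Spec\,\mathbb{Z}$, i.e., an arithmetic variety. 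Its generic point is the common image of the zero ideals $(0)\in Spec\,A_{\alpha}$ (each $\phi_{\alpha\beta}$ carries generic point to generic point), with stalk $\varinjlim (A_{\alpha})_{(0)}=k(X)$; hence $k(Z)=k(X)$.

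It then remains to check that $Z$ is essentially affine in $k(Z)=k(X)$. For the distinguished charts this is built in: each such chart equals $Spec\,A_{\alpha}$ with $A_{\alpha}\subseteq k(X)$. For an arbitrary affine open $U\subseteq Z$, $U$ contains the generic point, so $\mathcal{O}_{Z}(U)$ injects into $k(Z)$ and is thereby a subring of $k(Z)$, giving the first condition of \emph{Definition 4.3}; for the second, $U$ is covered by distinguished opens of the charts, which are literally spectra of localizations of the $A_{\alpha}$ inside $k(X)$, and under the canonical identification of an affine open with the spectrum of its ring of sections one obtains $U=Spec\,\mathcal{O}_{Z}(U)$. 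Finally, since $k(X)=k(Z)$ and $X\cong Z$, all three asserted properties hold.

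I expect the main obstacle to be the bookkeeping around the gluing rather than any deep point: one must verify that the transported family $\{\phi_{\alpha\beta}\}$ genuinely satisfies the cocycle condition on all triple overlaps, that the embeddings $A_{\alpha}\hookrightarrow k(X)$ are mutually compatible so that $k(Z)$ is \emph{literally} $k(X)$ and not merely isomorphic to it, and that enough affine opens can be realized as actual spectra of subrings of $k(X)$ so that ``$U=Spec\,\mathcal{O}_{Z}(U)$'' is read consistently for all affine opens and not just for the chosen charts. None of this is hard, but it is precisely where the content of the statement lies, and it is exactly the compatibility that the later construction $X[\Delta]$ of \S 2.5 relies on; because \emph{Lemma 4.2} is already stated for an arbitrary index set, the possible non-quasi-compactness of $X$ causes no additional difficulty.
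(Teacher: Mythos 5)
Your overall strategy is close in spirit to the paper's: replace each affine chart by the spectrum of its image in $k(X)$, then glue. But there is one genuine gap, and it lands exactly on the property that carries the weight of the lemma.

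You start from an arbitrary affine open \emph{cover} $\{U_\alpha\}_{\alpha\in\Gamma}$ of $X$, while the paper takes $\Gamma$ to index \emph{every} affine open set of $X$. This is not a stylistic choice. \emph{Definition 4.2} requires that every affine open set $U$ of $Z$ literally satisfies $\mathcal{O}_Z(U)\subseteq k(Z)$ and $U=\operatorname{Spec}(\mathcal{O}_Z(U))$ --- equality of topological spaces, not a canonical isomorphism. With all affine opens of $X$ in the index set, the paper gets this for free: for any affine open $U$ of $Z$, the corresponding $\rho(U)$ is an affine open of $X$, hence equal to some $W_\alpha$ in the family, so $U$ is by construction the underlying space of $\operatorname{Spec} A_\alpha$ with $A_\alpha\subseteq k(X)$. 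With only a cover, an affine open of $Z$ that is not one of your charts is some open subspace of a glued space, and your appeal to ``the canonical identification of an affine open with the spectrum of its ring of sections'' yields only an isomorphism $U\cong\operatorname{Spec}\mathcal{O}_Z(U)$, not the required equality. You correctly flag this in your last paragraph as the crux, but then assert without argument that ``none of this is hard''; in fact it is precisely the point at which an arbitrary cover is insufficient, and enlarging $\Gamma$ to all affine opens is the fix the paper chooses.

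A second, smaller discrepancy: you transport the gluing data of $X$ as $\phi_{\alpha\beta}=\theta_\beta\circ\theta_\alpha^{-1}$, while simultaneously invoking the ``$\phi_\alpha=\mathrm{id}$'' clause of the gluing lemma (\emph{Lemma 4.1} in the paper's numbering, not 4.2). That clause is what upgrades the conclusion from ``$\mathcal{F}|_{U_\alpha}\cong\mathcal{F}_\alpha$'' to the literal equality ``$\mathcal{F}|_{U_\alpha}=\mathcal{F}_\alpha$'' needed for essential affineness, but it is only available after one also has $\phi_{\alpha\beta}=\mathrm{id}$; the paper earns that by explicitly checking $\mathcal{F}_\alpha|_{U_{\alpha\beta}}=\mathcal{F}_\beta|_{U_{\alpha\beta}}$ as equal sheaves (which holds because all the $A_\alpha$ are subrings of the same field $k(X)$, so the overlap sections literally coincide). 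Your $\phi_{\alpha\beta}$ are a priori not identities, so you cannot quote the ``in particular'' clause as stated. This is repairable --- the same equality of overlap sheaves holds in your setting --- but as written the step is not justified, and combined with the cover-versus-all-affine-opens issue, the ``essentially affine'' conclusion does not follow from your argument.
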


Such an arithmetic variety $Z$ is called an \textbf{essentially affine realization} of $X$.

\begin{proof}
Denote by $\{W_{\alpha}\}_{\alpha\in \Gamma}$ the collection of all affine open sets in the scheme $X$ and denote by $$i_{\alpha}:\mathcal{O}_{X}(W_{\alpha})\to A_{\alpha}=i_{\alpha}(\mathcal{O}_{X}(W_{\alpha}))\subseteq k(X)$$  the canonical embedding for every $\alpha\in \Gamma$.

Let $$\widetilde{X}=\coprod_{\alpha\in \Gamma}W_{\alpha}$$ and $$\widetilde{Z}=\coprod_{\alpha\in \Gamma}U_{\alpha}$$
be the disjoint union of sets, respectively. Here, $U_{\alpha}$ denotes the underlying space of the affine scheme $Spec A_{\alpha}$ for any $\alpha\in \Gamma$.

For any points $x_{\alpha}\in W_{\alpha},x_{\beta}\in W_{\beta}$, we say $$x_{\alpha}\sim x_{\beta}$$ if and only if $$x_{\alpha}=x_{\beta}$$ are the same point in $X$.

Likewise, for any points $z_{\alpha}\in U_{\alpha},z_{\beta}\in U_{\beta}$, we say $$z_{\alpha}\sim z_{\beta}$$ if and only if $$\rho_{\alpha}(z_{\alpha})\sim\rho_{\beta}(z_{\beta})$$ holds in $\widetilde{X}$, where $\rho_{\alpha}$ denotes the homeomorphism of $U_{\alpha}$ onto $W_{\alpha}$ that is induced from the homomorphism $i_{\alpha}$ for any $\alpha\in \Gamma$.

Then we have the quotient sets $$X=\widetilde{X}/\sim;$$ $$Z=\widetilde{Z}/\sim.$$
It is seen that there is a bijection $\rho:Z\cong X$ between sets since $\widetilde{X}\cong\widetilde{Z}$ are bijective sets from all the $\rho_{\alpha}$.

Hence, by $\rho$, $Z$ is a topological space homeomorphic to $X$. Here, for any subset $U\subset Z$, we say $U$ is open in $Z$ if and only if $\rho(U)$ is an open set in $X$.

Fixed any $\alpha,\beta\in\Gamma$. Define $$U_{\alpha\beta}\triangleq U_{\alpha}\bigcap U_{\beta};$$
$$(U_{\alpha},\mathcal{F}_{\alpha})\triangleq({Spec}{A_{\alpha}},\widetilde{A_{\alpha}}).$$
We prove $$\mathcal{F}_{\alpha}\mid_{U_{\alpha\beta}}=\mathcal{F}_{\beta}\mid_{U_{\alpha\beta}}.$$

In deed, take any affine open subset
$$ U_{\gamma_{0}}={Spec}{A_{\gamma_{0}}}$$ contained in the set $ U_{\alpha\beta}$.
On the affine scheme $(U_{\alpha},\mathcal{F}_{\alpha})$, we have
$$\mathcal{F}_{\alpha}\mid_{U_{\gamma_{0}}}=\mathcal{F}_{\gamma_{0}}=\widetilde{A_{\gamma_{0}}};$$
on the affine scheme $(U_{\beta},\mathcal{F}_{\beta})$, we have
$$\mathcal{F}_{\beta}\mid_{U_{\gamma_{0}}}=\mathcal{F}_{\gamma_{0}}=\widetilde{A_{\gamma_{0}}}.$$
Then $$\mathcal{F}_{\alpha}\mid_{U_{\gamma_{0}}}=\mathcal{F}_{\beta}\mid_{U_{\gamma_{0}}}$$
holds for any such $U_{\gamma_{0}}$. Hence,
we must have
$$\mathcal{F}_{\alpha}\mid_{U_{\alpha\beta}}=\mathcal{F}_{\beta}\mid_{U_{\alpha\beta}}$$ by taking an open covering $$U_{\alpha\beta}=\bigcup_{\gamma_{0}}U_{\gamma_{0}}$$
since $\mathcal{F}_{\alpha}\mid_{U_{\alpha\beta}}$ and $\mathcal{F}_{\beta}\mid_{U_{\alpha\beta}}$ are two sheaves on the same space $U_{\alpha\beta}$, respectively.

Now apply \emph{Lemma 4.1} to this case here: $$U_{\alpha}=\rho^{-1}(W_{\alpha});$$ $$ \phi_{\alpha}=id_{U_{\alpha}};$$
$$\phi_{\alpha\beta}=id:(U_{\alpha \beta}, \mathcal{F}_{\alpha}\mid _{U_{\alpha \beta}})= (U_{\alpha \beta}, \mathcal{F}_{\beta}\mid _{U_{\alpha \beta}}).$$
The cocycle condition is automatically satisfied.

Hence, there is a sheaf $\mathcal{F}$ on $Z$ such that  $\mathcal{F}\mid_{U_{\alpha}}=\mathcal{F}_{\alpha}$ holds for any $\alpha\in \Gamma$. We have a scheme $(Z,\mathcal{F})$ satisfying the desired properties, where the isomorphism $$(Z,\mathcal{F})\cong (X,\mathcal{O}_{X})$$ is induced from $\rho$ just by considering the corresponding stalks at each point.
\end{proof}

\subsection{Construction for a general cover}

Let's consider such a preliminary fact. Let $L$ be an extension of a field $K$ and let $\Delta\subseteq L\setminus K$ be a subset.

\begin{remark}
Put
$\Delta _{L/K}\triangleq \{\sigma \left(x\right) \in L:x\in \Delta,\sigma \in Gal\left(
L/K\right)\}.$ Then we have $\delta(\Delta _{L/K})=\Delta _{L/K}$ for each automorphism $\delta\in Gal(L/K)$.
\end{remark}

Now we give a universal construction of a general cover over an arithmetic variety such that the function field of the cover can be equal to any prescribed filed.

\begin{lemma}
Let $Y$ be an arithmetic variety. Take any extension $L$ of the function field $k(Y)$ (not necessarily algebraic). Then there is an arithmetic variety $X_{L}$ and a  morphism $\phi_{L}:X_{L}\to Y$ such that
\begin{itemize}
\item $k(X_{L})=L$;

\item $\phi_{L}$ is affine and surjective;

\item $Aut(X_{L}/Y)\cong Gal(L/k(Y))$ are naturally isomorphic groups.
\end{itemize}
\end{lemma}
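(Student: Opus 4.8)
The plan is to build $X_L$ by gluing, exactly as in the essentially affine construction of \emph{Lemma 4.3}, but adjoining the generators of $L$ along the way. First I would replace $Y$ by an essentially affine realization $Z$ in the field $k(Y)$ (legitimate by \emph{Lemma 4.3}), so that every affine open $W_\alpha$ of $Y$ has $\mathcal{O}_Y(W_\alpha)=A_\alpha\subseteq k(Y)$ and $W_\alpha=\mathrm{Spec}\,A_\alpha$. Fix a set $\Delta\subseteq L\setminus k(Y)$ of generators of $L$ over $k(Y)$, and — so that the Galois action will be available — pass to the saturated set $\Delta_{L/k(Y)}=\{\sigma(x):x\in\Delta,\ \sigma\in Gal(L/k(Y))\}$, which by \emph{Remark 4.4} is stable under every element of $Gal(L/k(Y))$. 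For each $\alpha$ set $B_\alpha\triangleq A_\alpha[\Delta_{L/k(Y)}]\subseteq L$, a finitely-or-infinitely generated integral domain with fraction field $L$, and let $V_\alpha\triangleq\mathrm{Spec}\,B_\alpha$ with its structure sheaf $\mathcal{G}_\alpha=\widetilde{B_\alpha}$.

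The gluing data come for free: on overlaps $W_{\alpha\beta}=W_\alpha\cap W_\beta$ the identity on $k(Y)$ identifies localizations of $A_\alpha$ and $A_\beta$, hence (adjoining the same set $\Delta_{L/k(Y)}$) identifies the corresponding localizations of $B_\alpha$ and $B_\beta$ inside $L$; the argument is verbatim the sheaf-patching argument used in \emph{Lemma 4.3} (check agreement on a basis of small affine opens $U_{\gamma_0}=\mathrm{Spec}\,B_{\gamma_0}$, then conclude equality of the two sheaves on $U_{\alpha\beta}$). The cocycle condition is automatic because all the transition maps are restrictions of the identity of $L$. So \emph{Lemma 4.1} produces a scheme $(X_L,\mathcal{F})$ with an open cover by the $V_\alpha$, together with a morphism $\phi_L:X_L\to Y$ whose restriction $V_\alpha\to W_\alpha$ is $\mathrm{Spec}$ of the inclusion $A_\alpha\hookrightarrow B_\alpha$; this $\phi_L$ is affine by construction and surjective since each $\mathrm{Spec}\,B_\alpha\to\mathrm{Spec}\,A_\alpha$ is surjective (faithfully flat, or: dominant between integral schemes plus affineness). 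The generic point of $X_L$ is the zero ideal of each $B_\alpha$, so $k(X_L)=\mathrm{Frac}(B_\alpha)=L$. Finite-dimensionality of $X_L$ follows since $\dim X_L\leq \dim Y + tr.deg(L/k(Y)) <\infty$ when $tr.deg$ is finite, and more generally one checks the chains of primes in each $B_\alpha$ stay bounded because $B_\alpha$ is finitely generated over $A_\alpha$ as an algebra in the finitely-generated case; in the transcendental-basis case one argues dimension is controlled by that of $Y$ together with the (finite) transcendence degree — this is the point where one must be a little careful.

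For the group statement I would produce the isomorphism $Aut(X_L/Y)\cong Gal(L/k(Y))$ by the same recipe as in \emph{Lemma 3.1}: an automorphism $\sigma=(\sigma,\sigma^\sharp)$ of $X_L$ over $Y$ fixes the generic point and hence induces $\sigma^\sharp$ on $k(X_L)=L$ fixing $k(Y)$, giving a map $Aut(X_L/Y)\to Gal(L/k(Y))$, $\sigma\mapsto\sigma^{\sharp-1}$, which is a homomorphism and is injective by the affine-scheme separation argument of Step 3 there. Surjectivity is where the stability $\delta(\Delta_{L/k(Y)})=\Delta_{L/k(Y)}$ is essential: given $\rho\in Gal(L/k(Y))$, it carries $A_\alpha$ identically and permutes $\Delta_{L/k(Y)}$, hence sends $B_\alpha$ isomorphically onto $B_\alpha$ (not merely onto some other ring), so each affine piece $V_\alpha$ carries an automorphism over $W_\alpha$; these glue on overlaps exactly as in \emph{Substeps 4-a through 4-d} of \emph{Lemma 3.1} (using that $X_L$ is essentially affine in $L$, so the ``local completeness'' obstruction is vacuous), yielding $\lambda\in Aut(X_L/Y)$ with $t(\lambda)=\rho$. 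The one genuine obstacle, and the reason the saturation $\Delta\rightsquigarrow\Delta_{L/k(Y)}$ cannot be skipped, is precisely this surjectivity: without stability under $Gal(L/k(Y))$ a generic $\rho$ would move $B_\alpha$ to a different subring of $L$ and would not define an automorphism of $X_L$ at all. Once \emph{Lemma 4.5} is in hand, \emph{Theorem 2.6} follows by feeding the resulting $X_L$ into \emph{Theorem 2.3} (its local completeness being automatic from essential affineness) to upgrade the plain isomorphism of groups to the statement that $X_L$ is Galois — or tame/strong/absolute Galois — over $X$ in $L$ according to the type of $L/k(X)$, and the invariant-subring condition of \emph{Definition 2.5} is checked affine-locally as $\mathcal{O}_Y(W_\alpha)=A_\alpha=B_\alpha^{Gal(L/k(Y))}$, which holds because $L$ is Galois over $k(Y)$.
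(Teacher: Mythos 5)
Your construction is the same as the paper's (Lemma 4.5): pass to an essentially affine realization of $Y$, saturate $\Delta$ to the $Gal(L/k(Y))$-stable set $\Delta_{L/k(Y)}$, adjoin it affine-locally, glue by the cocycle-trivial patching of Lemma 4.1, and deduce the group isomorphism from the essential-affineness-forces-local-completeness observation plus Lemma 3.1, with the Galois-stability of $\Delta_{L/k(Y)}$ doing exactly the work you identify. Your flag about finite-dimensionality of $X_L$ (needed for it to be an arithmetic variety in the paper's sense) is a real point, but it is one the paper's own proof also leaves unaddressed rather than a defect specific to your argument.
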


\begin{proof}
(The approach here is based on a
trick originally in \cite{0909} and a refinement in \cite{0910}.)

From \emph{Lemma 4.3}, suppose that $Y$ is essentially affine in the function field $k(Y)$ without loss
of generality.

Denote by $\{V_{\alpha}\}_{\alpha\in \Gamma}$ the collection of all affine open sets $V_{\alpha}$ in the scheme $Y$. Let $B_{\alpha}=\mathcal{O }_{Y}\left( V_{\alpha}\right)$ for each $\alpha\in \Gamma$. We have $V_{\alpha}={Spec}{B_{\alpha}}$ and $B_{\alpha}\subseteq k(Y)\subseteq L$.

Suppose that
\begin{equation*}
\Delta\subseteq L\setminus k(Y)
\end{equation*}
is a set of generators of the field $L$ over the function field $k(Y)$. Set

$$\Delta _{L/k(X)}\triangleq \{\sigma \left(x\right) \in L:x\in \Delta,\sigma \in Gal\left(
L/k(Y)\right)\}.$$

In the following we will proceed in several
steps to construct an arithmetic variety $X=X_{L}$ and a morphism $\phi=\phi_{L}:X_{L}\to Y$.

{\textbf{Step 1.}} For every $\alpha\in \Gamma$, define
\begin{equation*}
A_{\alpha}\triangleq B_{\alpha}\left[ \Delta _{L/k(Y)}\right],
\end{equation*}
i.e., $A_{\alpha}$ is a  subring of $L$ generated over $B_{\alpha}$ by the set $\Delta _{L/k(Y)}$.

We have $Fr\left( A_{\alpha}\right) =L $. As $A_{\alpha}$ is a subring of $L$, by \emph{Remark 4.7} it is seen that the restriction $\delta\mid_{A_{\alpha}}:A_{\alpha}\to A_{\alpha}$ is an isomorphism for every $\delta\in Gal(L/k(Y))$.

Set
\begin{equation*}
i_{\alpha}:B_{\alpha} \hookrightarrow  A_{\alpha}
\end{equation*}
to be the inclusion.

{\textbf{Step 2.}} Define the disjoint unions
\begin{equation*}
\widetilde{X}=\coprod\limits_{\alpha\in\Gamma}Spec\left( A_{\alpha}\right) ;
\end{equation*}
\begin{equation*}
\widetilde{Y}=\coprod\limits_{\alpha\in\Gamma}Spec\left( B_{\alpha}\right) .
\end{equation*}

Then $\widetilde{X} $ is a topological space, where the topology $\tau _{\widetilde{X} }$
on $\widetilde{X} $ is naturally determined by the Zariski topologies on all $%
Spec\left( A_{\alpha}\right) .$

Denote by
\begin{equation*}
\pi _{1}:\widetilde{X} \rightarrow \widetilde{Y}
\end{equation*}
the natural map induced from the inclusions $i_{\alpha}$. It is seen that $\pi _{1}$ is a surjection just by considering the inclusion $i_{\alpha}:B_{\alpha}\rightarrow A_{\alpha}$ for every $\alpha\in\Gamma$ (see \cite{ega,iitaka}).

Let
\begin{equation*}
\pi _{0}:\widetilde{Y} \rightarrow Y=\widetilde{Y}/\sim
\end{equation*}
be the projection, where for any points $y_{\alpha}\in Spec B_{\alpha},y_{\beta}\in Spec B_{\beta}$, we say $$y_{\alpha}\sim y_{\beta}$$ if and only if $$y_{\alpha}= y_{\beta}$$ holds in $Y$.

Set
$$\pi_{Y}\triangleq\pi _{0}\circ \pi _{1}: \widetilde{X} \to Y$$ to be the projection.

{\textbf{Step 3.}} For any points $x_{\alpha}\in Spec A_{\alpha},x_{\beta}\in Spec A_{\beta}$, we say $$x_{\alpha}\sim x_{\beta}$$ if and only if $$y_{\alpha}=\pi_{1}(x_{\alpha})= y_{\beta}=\pi_{1}(x_{\beta})\in V_{\alpha\beta}$$ holds in $\widetilde{Y}$, i.e.,
if and only if $$\pi _{Y}(x_{\alpha})=\pi _{Y}(x_{\beta})\in V_{\alpha\beta}$$ holds\footnote{Note that here the equivalence of affine $L$-points, used in our earlier preprints, is automatically satisfied.} as points in $Y$. Here, $V_{\alpha\beta}\triangleq V_{\alpha}\bigcap V_{\beta}$.

Let
\begin{equation*}
X=\widetilde{X} /\sim
\end{equation*}
and let
\begin{equation*}
\pi _{X}:\widetilde{X} \rightarrow X
\end{equation*}
be the projection.

It is seen that $X$ is a topological space as a quotient of $\widetilde{X} .$

{\textbf{Step 4.}} Define a map
\begin{equation*}
\phi:X\rightarrow Y
\end{equation*}
by
\begin{equation*}
\pi _{X}\left( \widetilde{x}\right) \longmapsto \pi _{Y}\left( \widetilde{x}\right)
\end{equation*}
for each $\widetilde{x}\in \widetilde{X} $.

By \emph{Step 3} above it is seen that $\phi$ is independent of the choice of a representative $\widetilde{x}$ and hence $\phi$ is well-defined.

It is clear that $\phi$ is a surjection. For every $\alpha\in \Gamma$, put
$$U_{\alpha}=\phi^{-1}(V_{\alpha}).$$
Then we have $$U_{\alpha}={Spec}{A_{\alpha}};$$
$$U_{\alpha\beta}\triangleq U_{\alpha}\bigcap U_{\beta}=\phi^{-1}(V_{\alpha\beta})$$
for every $\alpha,\beta\in \Gamma$.

{\textbf{Step 5.}} For every $\alpha\in \Gamma$, we define
$$(U_{\alpha},\mathcal{F}_{\alpha})\triangleq({Spec}{A_{\alpha}},\widetilde{A_{\alpha}}).$$

Fixed any $\alpha,\beta\in\Gamma$. We prove $$\mathcal{F}_{\alpha}\mid_{U_{\alpha\beta}}=\mathcal{F}_{\beta}\mid_{U_{\alpha\beta}}.$$

In deed, take any point $y_{0}\in V_{\alpha\beta}=\phi(U_{\alpha\beta})$. There is a $\gamma_{0}\in\Gamma$ such that $$y_{0}\in V_{\gamma_{0}}={Spec}{B_{\gamma_{0}}}\subseteq V_{\alpha\beta}\subseteq V_{\alpha}\bigcap V_{\beta}.$$
Then for any $x_{0}\in U_{\alpha\beta}$, we have
$$x_{0}\in U_{\gamma_{0}}={Spec}{A_{\gamma_{0}}}\subseteq U_{\alpha\beta}\subseteq U_{\alpha}\bigcap U_{\beta}$$
according to \emph{Step 4} above.

At the same time, as $U_{\gamma_{0}}=\phi^{-1}(V_{\gamma_{0}})$, on the affine scheme $(U_{\alpha},\mathcal{F}_{\alpha})$, we have
$$\mathcal{F}_{\alpha}\mid_{U_{\gamma_{0}}}=\mathcal{F}_{\gamma_{0}}=\widetilde{A_{\gamma_{0}}};$$
on the affine scheme $(U_{\beta},\mathcal{F}_{\beta})$, we have
$$\mathcal{F}_{\beta}\mid_{U_{\gamma_{0}}}=\mathcal{F}_{\gamma_{0}}=\widetilde{A_{\gamma_{0}}}.$$
Hence, $$\mathcal{F}_{\alpha}\mid_{U_{\gamma_{0}}}=\mathcal{F}_{\beta}\mid_{U_{\gamma_{0}}}$$
holds for any such $U_{\gamma_{0}}$.

From the open covering $U_{\alpha\beta}=\bigcup_{\gamma_{0}}U_{\gamma_{0}}$, we must have
$$\mathcal{F}_{\alpha}\mid_{U_{\alpha\beta}}=\mathcal{F}_{\beta}\mid_{U_{\alpha\beta}}$$
since $\mathcal{F}_{\alpha}\mid_{U_{\alpha\beta}}$ and $\mathcal{F}_{\beta}\mid_{U_{\alpha\beta}}$ are two sheaves on the same space $U_{\alpha\beta}$, respectively.

{\textbf{Step 6.}}
Now apply \emph{Lemma 4.1} to this case here:
$$U_{\alpha}={Spec}{A_{\alpha}};$$
$$\phi_{\alpha}\triangleq id_{U_{\alpha}}:U_{\alpha}\to {Spec}{A_{\alpha}}.$$
$$\phi_{\alpha\beta}=id:(U_{\alpha \beta}, \mathcal{F}_{\alpha}\mid _{U_{\alpha \beta}})= (U_{\alpha \beta}, \mathcal{F}_{\beta}\mid _{U_{\alpha \beta}}).$$
The cocycle condition is automatically satisfied.

Hence, there is a sheaf $\mathcal{F}$ on $X$ such that  $\mathcal{F}\mid_{U_{\alpha}}=\mathcal{F}_{\alpha}$ holds for any $\alpha\in \Gamma$. We have a scheme $(X,\mathcal{F})$ that is essentially affine in the field $L=k(X)$.

{\textbf{Step 7.}} Prove that the map $\phi:X\to Y$ induces a morphism, namely $\phi=(\phi,\phi^{\sharp})$, from the scheme $(X,\mathcal{F})$ onto the scheme $(Y,\mathcal{O}_{Y})$.

In fact, for every affine open set $V_{\alpha}$ in $Y$, the restriction $$\phi\mid_{U_{\alpha}}:(U_{\alpha},\mathcal{F}_{\alpha})\to (V_{\alpha},\mathcal{O}_{Y}\mid_{V_{\alpha}})$$ is a morphism of affine schemes which is  induced from the embedding $i_{\alpha}:B_{\alpha}\hookrightarrow A_{\alpha}$ of rings. That is, $$\phi^{\sharp}\mid_{V_{\alpha}}=i_{\alpha}:\mathcal{O}_{Y}\mid_{V_{\alpha}}(V_{\alpha})=B_{\alpha}\hookrightarrow A_{\alpha}=\mathcal{F}_{\alpha}(U_{\alpha}).$$

Fixed any open subset $V$ of $Y$. Take any open covering $V=\bigcup_{i}V_{\alpha_{i}}$, where each $V_{\alpha_{i}}$ is an affine open set in $Y$. For every $t\in \mathcal{O}_{Y}(V)$, we have the restriction $$t_{\alpha_{i}}\triangleq t\mid_{V_{\alpha_{i}}};$$
then define the image $\phi^{\sharp}(t)$ to the element in $\mathcal{F}(U)$ determined by the elements $$\phi^{\sharp}\mid_{V_{\alpha_{i}}}(t_{\alpha_{i}})\in \mathcal{F}(U_{\alpha_{i}})$$
where $$U_{\alpha_{i}}=\phi^{-1}(V_{\alpha_{i}});$$
$$U=\phi^{-1}(V)=\bigcup_{i}U_{\alpha_{i}}.$$ Here, all $U_{\alpha_{i}}$ are affine open sets in $X$.

Hence, we have a morphism $\phi=(\phi,\phi^{\sharp}):(X,\mathcal{F})\to (Y,\mathcal{O}_{Y})$ of schemes. It is clear that $\phi$ is affine and surjective.

{\textbf{Step 8.}} Prove that $(X,\mathcal{F})$ is locally complete over  $ (Y,\mathcal{O}_{Y})$ by the morphism $\phi=(\phi,\phi^{\sharp})$.

In fact, given any automorphism $\rho\in Gal(L/k(Y))$ and any affine open sets $V_{\alpha}\subseteq Y$ and $U_{\alpha}=\phi^{-1}(V_{\alpha})$. Then we must have an isomorphism $$\widetilde{\rho}:\mathcal{F}(U_{\alpha})=A_{\alpha}\to \mathcal{F}(U_{\alpha})=A_{\alpha}$$ of rings  that induces an isomorphism $$\rho:L=Fr(A_{\alpha})\to L=Fr(A_{\alpha})$$ of fields just by taking the restriction $$\widetilde{\rho}=\rho\mid_{A_{\alpha}}:A_{\alpha} \to A_{\alpha}$$ since $\rho(A_{\alpha})=A_{\alpha}$ holds from \emph{Lemma 4.4}.

Hence, we have $$Aut(X/Y)\cong Gal(L/k(Y))$$ from \emph{Lemma 3.1}. This completes the proof.
\end{proof}

\subsection{Proof of Theorem 2.6}

Now we give the proof of \emph{Theorem 2.6}.

\begin{proof}
(\textbf{Proof of Theorem 2.6.})
It is immediately from \emph{Definition 2.5} and \emph{Lemma 4.5}.
\end{proof}

\bigskip

\textbf{Acknowledgment.} The author would like to express his
sincere gratitude to Professor Li Banghe for his advice and instructions on
algebraic geometry and topology.

\newpage

\end{document}